\def \W {\mathcal{W}}
\def \N {\mathcal{N}}
\def \bW {\overline{\mathcal{W}}}
\def \dist {{\rm dist}}
\DeclareMathOperator{\Rm}{Rm}
\DeclareMathOperator{\Ric}{Ric}
\DeclareMathOperator{\tr}{tr}
\newcommand*{\rom}[1]{\rm {\expandafter\@slowromancap\romannumeral #1@}}
\def \V {\mathcal{V}}
\def \n {\textbf{n}}
\numberwithin{equation}{section}
\newtheorem{Theorem}{Theorem}[section]
\newtheorem{Corollary}[Theorem]{Corollary}
\theoremstyle{definition}
\newtheorem{Remark}[Theorem]{Remark}
\title{Volume growth estimates of gradient Ricci solitons \\ \smallskip 
\emph{\footnotesize Dedicated to Professor Peter Li on the occasion of his seventieth birthday}}
\author{Pak-Yeung Chan, Zilu Ma, and Yongjia Zhang}
\begin{document}

\maketitle

\begin{abstract}
In this paper, we survey the volume growth estimates for shrinking, steady, and expanding gradient Ricci solitons. Together with the known results, we also prove some new volume growth estimates for expanding gradient Ricci solitons.

\end{abstract}

\section{Introduction}

Ever since Hamilton \cite{Ha82} invented the Ricci flow, the study of gradient Ricci solitons has been an central element of this field. Up to this point, gradient Ricci solitons have already been extensively studied. Many classic results have been proved, and some played an important role in the whole field of Ricci flow. For instance, Ricci shrinkers can be classifed completely in dimension two (c.f. \cite{Ha88}, \cite{Ha95}, \cite{BM15}) and in dimension three (c.f. \cite[\S 26]{Ha95}, \cite[Lemma 1.2]{Per03a}, \cite{CCZ08},  \cite{NW08}, \cite{PW10}), and with additional conditions in higher dimensions (c.f. \cite{Na10}, \cite{CC12}, \cite{CC13}, \cite{LNW18}, \cite{N19}). Steady solitons also admit a classification in dimension two (c.f. \cite{Ha95}, \cite{BM15}). In dimension three, the only noncollapsed steady soliton is the Bryant soliton \cite{Br13}, although there is at least one collapsed example  \cite{Lai20}. In higher dimensions, more examples of noncollapsed steady solitons are found by Appleton \cite{Ap17} and Lai \cite{Lai20}. The case of expanding solitons is much less clear, since even in dimension 3, one may find a one-parameter family of Ricci expanders (\cite{RFV1}). 

When a classification is not available, one naturally turns to consider the qualitative or quantitative properties of a general soliton, in hope that the analytic properties may have some implications on the geometric properties, or vice versa. Research results in this respect are massive. It is therefore impossible to include every aspect of the field within one survey paper. Thus we shall focus on one important aspect---that of volume estimates. In this paper, we shall survey volume estimates of all three kinds of Ricci solitons. Furthermore, we prove some new volume growth estimates for Ricci expanders. In particular, our estimate in dimension three is sharp, since it shows that every three-dimensional expander with bounded scalar curvature and proper potential function has a volume growth rate no greater than $r^3$, and this rate is satisfied by the Gaussian expander.

\section{Volume estimates for shrinking gradient Ricci solitons}

In this section we consider a complete shrinking gradient Ricci soliton $(M^n,g,f)$ normalized in the way that
\begin{align}\label{normalized_shrinker}
    \Ric +\nabla^2f &=\tfrac{1}{2}g.
    \\\nonumber
    \int_M (4\pi)^{-\frac{n}{2}}e^{-f}dg&=1.
\end{align}
Because of the logarithmic Sobolev inequality of Bakry-Emery \cite{BE85}, Carrillo-Ni \cite{CN09} observed that, for a shrinker normalized as in \eqref{normalized_shrinker}, the constant 
\begin{align}\label{shrinker_entropy}
    \mu_g&:=2\Delta f-|\nabla f|^2+R+f-n
\end{align}
is equal to Perelman's functional $\mu(g,1)$ defined in \eqref{munudefinition}. 


\subsection{The triviality of the entropy on a Ricci shrinker}

Obviously, if we consider the canonical form $(M^n,g_t)_{t\in(-\infty,0)}$ of a Ricci shrinker $(M^n,g,f)$, where
\begin{align*}
    g_t:=-t\phi^*_{\log(-t)}g,\quad \partial_s\phi_s=-\nabla f\circ \phi_s,\quad \phi_0=\operatorname{id},
\end{align*}
then its asymptotic shrinker in the sense of Perelman \cite[Proposition 11.2]{Per02} is the shrinker itself. 

On the other hand, the Nash entropy on an ancient solution always converges to the entropy of its asymptotic (metric) soliton (c.f. \cite{Bam20c}, \cite{MZ21}). So if the ancient solution in question is the canonical form $(M,g_t)$, then the Nash entropy converges to the quantity $\mu_g$ defined in formula \eqref{shrinker_entropy}. We thus observe that the Nash entropy on the canonical form of a shrinker is always bounded from below. In particular, for any $x\in M$, we have
\begin{align}\label{boundednashforshrinker}
   \inf_{\tau>0} \N_{x,-1}(\tau)=\mu_g>-\infty.
\end{align}

Therefore, we may regard (the canonical form of) a Ricci shrinker as an ancient Ricci flow with bounded Nash entropy, and many results in \cite{Bam20a,Bam20c} can be applied in this case. This provides a perspective of understanding some classical results. Here we remind the reader that although in general a shrinker is not known to have bounded curvature, one may still apply the cut-off argument and heat kernel gaussian estimates of Li-Wang \cite{LW20} to carry out many of Bamler's proofs. Let us first of all recall the uniform Sobolev inequality of Li-Wang \cite{LW20}. In fact, a uniform Sobolev inequality is equivalent to a lower bound of Perelman's $\nu$-functional defined as follows
\begin{align}\label{munudefinition}
    \nonumber\bW(g,u,\tau)&:=\int_M\tau\left(\frac{|\nabla u|^2}{u}+Ru\right)dg-\int_Mu\log udg-n-\frac{n}{2}\log(4\pi\tau),
    \\
    \mu(g,\tau)&:=\inf\left\{\bW(g,u,\tau)\,\left|\, u\geq 0,\ \sqrt{u}\in C_0^\infty(M),\ \int_M udg=1\right\}\right.,
    \\\nonumber
    \nu(g)&:= \inf_{\tau>0}\mu(g,\tau).
\end{align}

\begin{Theorem}[\cite{LW20}]\label{LWSobolev}
Let $(M^n,g,f)$ be a shrinker normalized as in \eqref{normalized_shrinker}. Then we have
$$\nu(g):=\inf_{\tau>0}\mu(g,\tau)=\mu_g,$$
where $\mu_g$ is the constant in \eqref{shrinker_entropy}.
\end{Theorem}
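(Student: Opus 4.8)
The plan is to reduce the claimed identity $\nu(g)=\mu_g$ to two separate assertions: first, that the distinguished weight $u_0:=(4\pi)^{-n/2}e^{-f}$ realizes the value $\mu(g,1)=\mu_g$ at scale $\tau=1$; and second, that the scale $\tau=1$ \emph{minimizes} $\tau\mapsto\mu(g,\tau)$, so that the infimum defining $\nu(g)$ in \eqref{munudefinition} is attained there. Granting both, $\nu(g)=\inf_{\tau>0}\mu(g,\tau)=\mu(g,1)=\mu_g$.

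For the first assertion I would substitute $u_0$ into $\bW(g,\cdot,1)$, which is the Carrillo--Ni observation recalled above. Since $\nabla u_0=-u_0\nabla f$ and $\log u_0=-\tfrac n2\log(4\pi)-f$, a direct computation followed by one integration by parts (legitimate because $f$ grows quadratically by Cao--Zhou, so $e^{-f}$ decays against the at-most-polynomial volume growth) converts the Dirichlet term and yields
\[
\bW(g,u_0,1)=\int_M\left(2\Delta f-|\nabla f|^2+R+f-n\right)u_0\,dg=\int_M\mu_g\,u_0\,dg=\mu_g,
\]
the last equality using that the integrand is the \emph{constant} $\mu_g$ of \eqref{shrinker_entropy} and that $\int_M u_0\,dg=1$ by \eqref{normalized_shrinker}. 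This gives $\mu(g,1)\le\mu_g$. For the reverse inequality I would invoke the Bakry--\'Emery logarithmic Sobolev inequality for the probability measure $d\mathfrak m:=u_0\,dg$. Writing a general competitor as $u=v^2u_0$ with $\int_M v^2\,d\mathfrak m=1$ and expanding, the normalization \eqref{normalized_shrinker}---which says exactly that the Bakry--\'Emery tensor satisfies $\Ric+\nabla^2 f=\tfrac12 g$, i.e.\ a curvature bound with $K=\tfrac12$---produces the clean identity $\bW(g,v^2u_0,1)-\mu_g=4\int_M|\nabla v|^2\,d\mathfrak m-\int_M v^2\log v^2\,d\mathfrak m$. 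The Bakry--\'Emery inequality with $K=\tfrac12$ is precisely $\int_M v^2\log v^2\,d\mathfrak m\le 4\int_M|\nabla v|^2\,d\mathfrak m$, so this deficit is nonnegative, with equality iff $v\equiv 1$. Hence $u_0$ is the minimizer and $\mu(g,1)=\mu_g$.

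For the second assertion I would exploit the self-similarity of the canonical form $(M,g_t)_{t\in(-\infty,0)}$ recalled above, for which $g_{-1}=g$ and, by the scaling and diffeomorphism invariance of $\mu$, $\mu(g_t,\tau)=\mu(-t\,g,\tau)=\mu\!\left(g,\tau/(-t)\right)$. Perelman's monotonicity states that for \emph{any} constant $c$ the function $t\mapsto\mu(g_t,c-t)$ is nondecreasing wherever $c-t>0$, since $\tfrac{d}{dt}(c-t)=-1$. Setting $h(t):=(c-t)/(-t)=1-c/t$, this reads: $\mu(g,h(t))$ is nondecreasing in $t$, while $h(t)\to 1$ as $t\to-\infty$. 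Choosing $c>0$ makes $h$ increase from $1$ to $\infty$, so $\mu(g,\tau)\ge\mu(g,1)$ for $\tau>1$; choosing $c<0$ makes $h$ decrease from $1$ to $0$ on its domain $t<c$, so $\mu(g,\tau)\ge\mu(g,1)$ for $\tau<1$ as well. Using the continuity of $\tau\mapsto\mu(g,\tau)$ to pass to the limit $h\to1$, I conclude $\mu(g,\tau)\ge\mu(g,1)=\mu_g$ for every $\tau>0$, whence $\nu(g)=\mu_g$.

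The main obstacle is analytic rather than formal: each step must be carried out on a complete, possibly noncompact shrinker whose curvature need not be bounded. One must therefore establish existence and regularity of the $\mu(g,\tau)$-minimizers, the validity of Perelman's monotonicity, the integrations by parts, and the continuity of $\tau\mapsto\mu(g,\tau)$ in this generality. This is exactly where the heat kernel Gaussian estimates and cutoff arguments of Li--Wang \cite{LW20} are indispensable, supplying the decay and compactness needed to run the variational and monotonicity machinery; the nonnegativity of $R$ and the quadratic growth of $f$ then guarantee the relevant integrability.
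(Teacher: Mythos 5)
Your Part 1 (the Carrillo--Ni identity $\mu(g,1)=\mu_g$ via Bakry--\'Emery) is correct, and it is exactly the input the paper takes from \cite{BE85, CN09}. The gap is in Part 2, and it sits precisely where the real difficulty of the theorem lies. Your scaling/diffeomorphism reduction $\mu(g_t,c-t)=\mu(g,h(t))$, $h(t)=1-c/t$, is right, but note what it cannot do: because the canonical form is a fixed point of the flow modulo scaling and pullback, the reparametrized scale $h(t)$ never crosses $1$ --- it stays in $(1,\infty)$ when $c>0$ and in $(0,1)$ when $c<0$. So Perelman's monotonicity only yields that $\mu(g,\cdot)$ is nondecreasing on $(1,\infty)$ and nonincreasing on $(0,1)$; it compares nothing across $\tau=1$, and everything hinges on your appeal to ``continuity of $\tau\mapsto\mu(g,\tau)$'' at $\tau=1$. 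That is not a technicality. The side $\tau\to1^+$ is cheap: since $R\ge0$ on a shrinker, $\bW(g,u,\tau)\ge\bW(g,u,1)-\tfrac{n}{2}\log\tau\ge\mu(g,1)-\tfrac{n}{2}\log\tau$ for $\tau>1$. But on the side $\tau<1$ the Dirichlet term enters with the wrong sign, and the inequality you need, $\lim_{\tau\to1^-}\mu(g,\tau)\ge\mu(g,1)$, is the hard direction of semicontinuity: $\mu$, being an infimum of functions continuous in $\tau$, is automatically \emph{upper} semicontinuous, never lower semicontinuous for free. Worse, by your own monotonicity on $(0,1)$ one has $\nu(g)=\min\{\mu(g,1),\lim_{\tau\to1^-}\mu(g,\tau)\}$, so this left-continuity \emph{is} the theorem: you must rule out almost-minimizers at scales $\tau_i\uparrow1$ whose Dirichlet energy blows up like $(1-\tau_i)^{-1}$ while their entropy dips below $\mu_g$, and neither the Bakry--\'Emery inequality nor the heat-kernel and cutoff estimates of \cite{LW20} hand you that statement; \cite{LW20} proves the theorem by a different mechanism, not by establishing continuity of $\mu$ in $\tau$.

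The paper's argument is built to avoid exactly this trap. For an \emph{arbitrary} test pair $(u,\tau)$ it evolves $u$ backward by the conjugate heat equation along the canonical form, sets $\tau_t=\tau-1-t$, and uses two facts: $\tfrac{d}{dt}\bW(g_t,u_t,\tau_t)\ge0$, and $\lim_{t\to-\infty}\bW(g_t,u_t,\tau_t)=\mu_g$. The second fact --- the actual engine --- comes from the exponential convergence of any conjugate heat flow on the canonical form to the soliton measure $(-4\pi t)^{-n/2}e^{-f}$, via the spectral gap $\lambda_1(-\Delta_f)\ge\tfrac12$ of the drift Laplacian (see \cite[Section 9]{CMZ21a}); this is what connects an arbitrary scale $\tau$, on \emph{either} side of $1$, to the soliton value $\mu_g$. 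In particular it needs no existence or compactness of $\mu$-minimizers, no monotonicity of the infimum $\mu$ itself, and no continuity of $\mu$ in $\tau$ --- only monotonicity of $\bW$ along one explicit conjugate heat flow, justified by the cutoffs and Gaussian bounds of \cite{LW20}. If you want to salvage your route, you must supply an independent proof of the left lower semicontinuity of $\mu(g,\cdot)$ at $\tau=1$, and the only natural way to obtain it is essentially the flow argument just described.
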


We briefly describe the idea of the proof. One may consider the canonical form $(M^n,g_t)_{t\in(-\infty,0)}$ and estimate $\nu(g_{-1})$. Let $\tau>0$ be any number and let $u$ be any test function for $\mu(g_{-1},\tau)$. Define
\begin{align*}
    u_t(x):=\int_MK(x,t\,|\,\cdot,-1)u\,dg_{-1},\quad \tau_t=\tau-1-t,\quad t\le-1,
\end{align*}
where $K$ is the conjugate heat kernel. Then we have 
\begin{align}\label{monotonicityandconvergenceofbW}
    \frac{d}{dt}\bW(g_t,u_t,\tau_t)\ge0,\quad\lim_{t\to-1}\bW(g_t,u_t,\tau_t)=\bW(g,u,\tau),\quad \lim_{t\to-\infty}\bW(g_t,u_t,\tau_t)=\mu_g.
\end{align}
To see why the last equation is true, one may refer to \cite[Section 9]{CMZ21a}. In fact, when $-t\to\infty$, the difference between $-t$ and $\tau_t$ is negligible. Now, apparently, \eqref{monotonicityandconvergenceofbW} implies Theorem \ref{LWSobolev}. To make the argument rigorous, one may find the cut-off argument and heat kernel gaussian bounds in \cite{LW20} helpful.

\subsection{Upper volume growth estimate}

Cao-Zhou \cite{CZ10} proved the following nice growth estimates for the shrinker potential $f$ (see also \cite{HM11}). These estimates show that, to estimate the volume growth rate, one may consider the sub-level sets of $f$ instead of geodesic disks; the former proves to be much more analytically tractable than the latter.

\begin{Theorem}[\cite{CZ10}]
Let $(M^n,g,f)$ be a Ricci shrinker normalized as in \eqref{normalized_shrinker}, then we have
\begin{align*}
    \frac{1}{4}\left(\dist(x,p)-5n\right)_+^2\le f(x)-\mu_g\le\frac{1}{4}\left(\dist(x,p)+\sqrt{2n}\right)^2,
\end{align*}
where $p$ is the point where $f$ attains its minimum and $\mu_g$ is the quantity in \eqref{shrinker_entropy}.
\end{Theorem}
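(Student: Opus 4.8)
The plan is to work throughout with the shifted potential $\tilde f := f - \mu_g$ and the two soliton identities coming from the normalization. Tracing $\Ric + \nabla^2 f = \tfrac12 g$ gives $\Delta f = \tfrac n2 - R$, and the computation recorded after \eqref{shrinker_entropy} (substituting $\Delta f = \tfrac n2 - R$ into the definition of $\mu_g$) shows that the a priori constant $R + |\nabla f|^2 - f$ equals $-\mu_g$, i.e. the pointwise identity $R + |\nabla f|^2 = \tilde f$ holds. Since $R \ge 0$ on a complete shrinker, this gives both $\tilde f \ge 0$ and the gradient bound $|\nabla \tilde f| = |\nabla f| \le \sqrt{\tilde f}$, which will drive the upper estimate. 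I would also record that at the minimum $p$ one has $\nabla f(p) = 0$ and $\nabla^2 f(p) \ge 0$, so $R(p) = \tfrac n2 - \Delta f(p) \le \tfrac n2$ and hence $\tilde f(p) = R(p) \le \tfrac n2$.

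For the upper bound, the gradient estimate $|\nabla \tilde f| \le \sqrt{\tilde f}$ gives $|\nabla \sqrt{\tilde f}| \le \tfrac12$ wherever $\tilde f > 0$. Integrating this Lipschitz bound along a minimal geodesic from $p$ to $x$ yields $\sqrt{\tilde f(x)} \le \sqrt{\tilde f(p)} + \tfrac12 \dist(x,p) \le \sqrt{n/2} + \tfrac12 \dist(x,p)$, and squaring gives precisely $\tilde f(x) \le \tfrac14 (\dist(x,p)+\sqrt{2n})^2$. The only care needed is at the zero set of $\tilde f$, which is handled by applying the bound where $\tilde f > 0$ and invoking continuity.

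For the lower bound, let $\gamma$ be a unit-speed minimal geodesic from $p=\gamma(0)$ to $x=\gamma(r_0)$ with $r_0 = \dist(x,p)$, and set $\varphi(s) = f(\gamma(s))$. The Hessian identity gives $\varphi''(s) = \nabla^2 f(\dot\gamma,\dot\gamma) = \tfrac12 - \Ric(\dot\gamma,\dot\gamma)$, and $\varphi'(0) = 0$ since $\nabla f(p) = 0$; integrating, $\langle \nabla f(x), \dot\gamma(r_0)\rangle = \varphi'(r_0) = \tfrac{r_0}2 - \int_0^{r_0} \Ric(\dot\gamma,\dot\gamma)\,ds$. Bounding the left-hand side from above by $|\nabla f(x)| \le \sqrt{\tilde f(x)}$, the whole problem reduces to the single inequality $\int_0^{r_0} \Ric(\dot\gamma,\dot\gamma)\,ds \le \tfrac{5n}2$: granting it, $\sqrt{\tilde f(x)} \ge \tfrac12(r_0 - 5n)$, which squares to the claim (the estimate being vacuous for $r_0 \le 5n$ by the $(\,\cdot\,)_+$).

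The main obstacle is exactly this curvature-integral bound, and it is genuinely delicate. Because a complete shrinker need not have bounded curvature, $\int_0^{r_0} \Ric(\dot\gamma,\dot\gamma)\,ds$ cannot be controlled by the minimality of $\gamma$ alone: on a general manifold a short, highly curved region traversed by $\gamma$ can contribute an arbitrarily large amount to this integral while $\gamma$ remains minimizing. The bound must therefore use the soliton structure essentially, via the Bakry--Émery identity $\Ric + \nabla^2 f = \tfrac12 g$ (so that $\Ric_f \equiv \tfrac12 g > 0$), fed into the second variation of arc length together with the Hessian identity $\Ric(\dot\gamma,\dot\gamma) = \tfrac12 - \varphi''$ and the already-established upper estimate to control the behavior near the endpoints. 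Tracking those endpoint contributions and extracting the explicit dimensional constant $5n$ is the step most in need of careful justification, and I expect it to be where essentially all the work lies.
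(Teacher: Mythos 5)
The paper itself gives no proof of this theorem --- it is a survey statement quoted from Cao--Zhou \cite{CZ10}, with the dimensional constants $5n$ and $\sqrt{2n}$ as sharpened by Haslhofer--M\"uller \cite{HM11} --- so your proposal can only be measured against the standard argument in those references. Your upper bound is complete, correct, and is exactly that argument: the identity $R+|\nabla f|^2=f-\mu_g=:\tilde f$, Chen's $R\ge 0$, the consequent bound $|\nabla \sqrt{\tilde f}|\le \tfrac12$, and $\tilde f(p)=R(p)\le n/2$ at the minimum are precisely the right ingredients.

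The lower bound, however, has a genuine gap, and it is not merely that you leave the curvature-integral estimate unproven (you say explicitly this is ``where essentially all the work lies''; but that estimate \emph{is} the lower bound --- what you do prove occupies two lines). The deeper problem is that the inequality you reduce to, $\int_0^{r_0}\Ric(\dot\gamma,\dot\gamma)\,ds\le \tfrac{5n}{2}$, is the wrong target: it is strictly stronger than the theorem, it is not what the second-variation argument yields, and it is essentially equivalent to an open problem. Indeed, granting it, you would get $|\nabla f(x)|\ge \varphi'(r_0)\ge \tfrac12(r_0-5n)$, hence
\[
R(x)=\tilde f(x)-|\nabla f(x)|^2\le \tfrac14\bigl(r_0+\sqrt{2n}\bigr)^2-\tfrac14\bigl(r_0-5n\bigr)^2=O(n\,r_0),
\]
a pointwise \emph{linear} growth bound for the scalar curvature of an arbitrary shrinker; conversely, on any shrinker whose scalar curvature grew superlinearly along a minimal geodesic ray from $p$, your inequality would fail, since $\int_0^{r_0}\Ric(\dot\gamma,\dot\gamma)\,ds=\tfrac{r_0}{2}-\varphi'(r_0)\ge \tfrac{r_0}{2}-|\nabla f(x)|$. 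No such linear bound is known for general shrinkers (the only general upper bound is $R\le \tilde f=O(r_0^2)$). What Cao--Zhou and Haslhofer--M\"uller actually prove is weaker and is structured around a cancellation. Taking the usual cutoff $\phi$ ($\phi=1$ on $[1,r_0-1]$, linear near the ends), second variation of arclength gives $\int_0^{r_0}\phi^2\Ric(\dot\gamma,\dot\gamma)\,ds\le 2(n-1)$; on the two end intervals one writes $\Ric(\dot\gamma,\dot\gamma)=\tfrac12-\varphi''$ and integrates by parts, and the boundary term at $s=r_0$ produces exactly $-\varphi'(r_0)$, so that
\[
\int_0^{r_0}\Ric(\dot\gamma,\dot\gamma)\,ds\;\le\; c(n)+\Bigl(2\int_{r_0-1}^{r_0}\phi\,\varphi'\,ds-\varphi'(r_0)\Bigr)\;\le\; c(n)+\tfrac12+\sqrt{\tilde f(x)}-\varphi'(r_0),
\]
where the middle term is estimated via $|\varphi'|\le\sqrt{\tilde f}\le \sqrt{\tilde f(x)}+\tfrac12$ near $s=r_0$ (this is where your already-established upper bound enters). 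Note that neither $\int_0^{r_0}\Ric(\dot\gamma,\dot\gamma)\,ds$ nor $\varphi'(r_0)$ is controlled separately; instead one substitutes $\int_0^{r_0}\Ric(\dot\gamma,\dot\gamma)\,ds=\tfrac{r_0}{2}-\varphi'(r_0)$ into the left-hand side, the two occurrences of $\varphi'(r_0)$ cancel, and one reads off $\sqrt{\tilde f(x)}\ge \tfrac{r_0}{2}-c'(n)$ directly, with $c'(n)\le \tfrac{5n}{2}$. So you identified the right toolkit (second variation, the Hessian identity at the ends, the upper bound for the gradient), but the fix is not to work harder at your ``single inequality'': it is to abandon it, keep the $-\varphi'(r_0)$ boundary term on the right-hand side, and let it cancel rather than trying to absorb it into a dimensional constant.
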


Following the idea mentioned at the beginning of this subsection, let $\mathcal{V}(s)$ and $\mathcal{R}(s)$ be the functions
\begin{equation}\label{R and V}
\mathcal{V}(s)=\int_{\{f< \mu_g+s\}}\,dg \quad \text{  and } \quad \mathcal{R}(s)=\int_{\{f< \mu_g+s\}} R\,dg.
\end{equation}
By integrating the trace of \eqref{normalized_shrinker}, Cao-Zhou \cite{CZ10} proved the following differential equation for $\mathcal{V}$ and $\mathcal{R}$
\begin{equation}\label{volumeode}
    \tfrac{n}{2}\mathcal{V}(s)-\mathcal{R}(s)=s\mathcal{V}'(s)-\mathcal{R}'(s).
\end{equation}
Using the above equation, Cao-Zhou \cite{CZ10} and Munteanu \cite{Mun09} showed that a complete gradient shrinker has at most Euclidean volume growth (see also \cite{HM11}). The volume estimate is sharp on the Gaussian shrinker and on the conical shrinker constructed by Felman-Ilmanen-Knopf \cite{FIK03}. See also the generalization by Munteanu-Wang \cite{MW14} to smooth metric measure space with $\Ric_f\ge 1/2$. 

\begin{Theorem}\cite{CZ10, Mun09, MW14}\label{shrinker up thm} Let $(M^n ,g, f)$ be a complete shrinking gradient Ricci soliton normalized as in \eqref{normalized_shrinker}. Then there is a dimensional constant $c=c(n)$ such that for any $p\in M$ and $r>0$, 
\begin{equation}\label{shrinker volume up}
|B_r(p)|\le ce^{f(p)-\mu_g}r^n.
\end{equation}
\end{Theorem}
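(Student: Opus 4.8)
The plan is to follow Cao--Zhou's strategy of replacing geodesic balls by sublevel sets of the potential, for which the soliton structure yields an exact first-order ODE, and then to convert back to balls. First I would record the two pointwise identities that drive everything. Tracing \eqref{normalized_shrinker} gives $\Delta f = \tfrac n2 - R$, and feeding this into \eqref{shrinker_entropy} together with the fact that $\mu_g$ is constant yields the soliton identity $R + |\nabla f|^2 = f - \mu_g$. Writing $\tilde f := f-\mu_g$ and using B.-L.\ Chen's theorem $R\ge 0$, this gives $0\le \tilde f$, $|\nabla f|^2 \le \tilde f$ and $R\le \tilde f$; in particular $\sqrt{\tilde f}$ is $\tfrac12$-Lipschitz. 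By the potential estimate of Cao--Zhou, $f$ is proper and attains its minimum at some $p_0$, and the sublevel sets $\{\tilde f < s\}$ are compact and sandwiched between geodesic balls about $p_0$ of radii comparable to $2\sqrt s$.

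The analytic core is the ODE \eqref{volumeode} for the quantities $\mathcal V,\mathcal R$ in \eqref{R and V}. Integrating $\Delta f = \tfrac n2 - R$ over $\{\tilde f<s\}$ and applying the divergence theorem gives $\tfrac n2\mathcal V(s)-\mathcal R(s)=\int_{\{\tilde f=s\}}|\nabla f|\,d\sigma\ge 0$, so $\mathcal R\le\tfrac n2\mathcal V$. I would then introduce the auxiliary quantity $Q(s):=\mathcal V(s)s^{-n/2}-\mathcal R(s)s^{-n/2-1}$ and compute, using \eqref{volumeode}, that $Q'(s)=\mathcal R(s)s^{-n/2-2}\big(\tfrac n2+1-s\big)$; since $\mathcal R\ge 0$, $Q$ is nonincreasing for $s>\tfrac n2+1$. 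Combining $Q(s)\le Q(\tfrac n2+1)$ with $\mathcal R\le\tfrac n2\mathcal V$ and absorbing the resulting $\tfrac{n}{2s}\mathcal V(s)s^{-n/2}$ term for $s\ge n$ yields $\mathcal V(s)\le c(n)\,\mathcal V(\tfrac n2+1)\,s^{n/2}$. To make the constant dimensional I would bound the base volume by the total weighted mass: $\mathcal V(\tfrac n2+1)\,e^{-(\frac n2+1)}\le\int_M e^{-\tilde f}\,dg=e^{\mu_g}(4\pi)^{n/2}$, and since $\mu_g=\nu(g)=\inf_\tau\mu(g,\tau)\le\lim_{\tau\to0}\mu(g,\tau)=0$ (small scales are Euclidean), this is at most $(4\pi)^{n/2}$. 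Hence $\mathcal V(s)\le c(n)\,s^{n/2}$ for all $s\ge 1$.

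To return to balls, the Lipschitz bound gives, for $x\in B_r(p)$ with $a:=\sqrt{\tilde f(p)}$, that $\sqrt{\tilde f(x)}\le a+\tfrac r2$, so $B_r(p)\subseteq\{\tilde f<(a+\tfrac r2)^2\}$ and therefore $|B_r(p)|\le \mathcal V\big((a+\tfrac r2)^2\big)\le c(n)\big(a+\tfrac r2\big)^n$. Since $e^{f(p)-\mu_g}=e^{a^2}\ge 1$, the elementary inequality $(a+\tfrac r2)^n\le c(n)e^{a^2}r^n$ holds whenever $r$ is not too small compared with $a$ (precisely, once $a/r\lesssim e^{a^2/n}$), which covers the entire range of moderate-to-large radii and immediately gives \eqref{shrinker volume up} there.

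The remaining --- and genuinely harder --- regime is that of small balls centered far out, i.e.\ $r\ll a$, where the sublevel inclusion is too lossy (a small ball sits inside a very fat shell) and where the curvature of the soliton need not be bounded, so no naive Bishop--Gromov bound is available. Here the right tool is the Bakry--\'Emery volume comparison for the weighted manifold $(M,g,e^{-f}dg)$: the normalization \eqref{normalized_shrinker} says exactly $\Ric+\nabla^2 f=\tfrac12 g$, i.e.\ $\Ric_f\ge\tfrac12>0$, and on $B_r(p)$ one has the gradient control $|\nabla f|\le a+\tfrac r2$. Feeding these into the Wei--Wylie mean-curvature comparison produces a bound of the form $|B_r(p)|\le c(n)e^{(a+r/2)r}r^n$; in the regime $r\ll a$ the exponent $(a+\tfrac r2)r$ is dominated by $a^2$, so this too is bounded by $c(n)e^{a^2}r^n$. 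I expect this reconciliation of the two regimes, together with the need to run the weighted comparison without any bound on $f$ itself, to be the main obstacle; the sublevel ODE, by contrast, is essentially forced once the identity $R+|\nabla f|^2=\tilde f$ and the monotone quantity $Q$ are in hand.
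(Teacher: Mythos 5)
Your proposal is correct and follows essentially the same route the paper attributes to Cao--Zhou, Munteanu, and Munteanu--Wang: your monotone quantity $Q(s)=s^{-n/2}\mathcal V(s)-s^{-n/2-1}\mathcal R(s)$ is exactly the Cao--Zhou argument built on the sublevel-set identity \eqref{volumeode}, and the Bakry--\'Emery (Wei--Wylie) comparison for small balls centered far from the minimum is precisely the weighted-manifold ingredient from \cite{MW14} needed to get the center dependence $e^{f(p)-\mu_g}$ for all $p$ and all $r>0$. The only step worth making explicit is the reconciliation of the two regimes, which works cleanly if you split at $r=a/3$ (with $a=\sqrt{f(p)-\mu_g}$): for $r\ge a/3$ one has $(a/r+\tfrac12)^n\le c(n)$, so the sublevel bound suffices, while for $r\le a/3$ the comparison exponent satisfies $2\left(a+\tfrac r2\right)r\le\tfrac79 a^2$, so it is indeed dominated by $a^2$ as you claim.
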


As pointed out in \cite{HM11}, if the center $p$ is chosen to be the minimum point of $f$, then it follows from \eqref{shrinker_entropy} that $f(p)-\mu_g\le n/2$ and the constant on the R.H.S. of \eqref{shrinker volume up} can be made dimensional. However, the recent work of Bamler \cite{Bam20a} implies a stronger result than Theorem \ref{shrinker up thm}. Since shrinkers all have nonnegative scalar curvature (c.f. \cite{Che09}), one may apply the result of \cite[Theorem 8.1]{Bam20a} to Ricci shrinkers and show that, based at any point, the volume growth rate has an Euclidean upper bound. One may use the techniques in \cite{LW20} to make this argument rigorous. Here we only present the following conclusion.

\begin{Theorem}[{\cite[Theorem 8.1]{Bam20a}}]
Let $(M^n,g,f)$ be a (not necessarily normalized) Ricci shrinker. Then for any point $x\in M$ and any $r>0$, we have
$$|B_r(x)|\le C(n)r^n,$$
where $C(n)$ is a dimensional constant.
\end{Theorem}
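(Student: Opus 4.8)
The plan is to realize $(M,g)$ as the time slice $t=-1$ of its canonical form $(M^n,g_t)_{t\in(-\infty,0)}$, where $g_{-1}=g$, and then apply Bamler's entropy--volume comparison at the point $(x,-1)$. First I would record two properties of the pointed Nash entropy of this ancient flow based at $(x,-1)$. Property (i): it is uniformly bounded below, $\N_{x,-1}(\tau)\ge\mu_g$ for every $\tau>0$, which is exactly \eqref{boundednashforshrinker}. Property (ii): it is bounded above by $0$, because $\N_{x,-1}(\tau)\to 0$ as $\tau\to0^+$ while $\tau\mapsto\N_{x,-1}(\tau)$ is nonincreasing (so it decreases from its Euclidean small-scale value). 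Property (i) is what places the canonical form within the scope of Bamler's machinery, while property (ii) is the ingredient that will ultimately make the constant dimensional.

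Next I would invoke the nonnegativity of the scalar curvature (Chen \cite{Che09}) together with \cite[Theorem 8.1]{Bam20a}. The content I want from that theorem is the one-sided comparison
\begin{align*}
    \log\frac{|B(x,-1,r)|}{r^n}\le \N_{x,-1}(r^2)+C(n),
\end{align*}
valid at every scale $r>0$. Here the hypothesis $R\ge 0$ is precisely what prevents the conjugate heat kernel based at $(x,-1)$ from spreading too quickly, and hence is what lets one bound the volume from above by the entropy in this direction; note that the reverse (noncollapsing) inequality holds generally, but this upper direction genuinely fails without a scalar curvature lower bound. Feeding in property (ii), namely $\N_{x,-1}(r^2)\le 0$, the right-hand side is at most $C(n)$, so that $|B(x,-1,r)|\le e^{C(n)}r^n=:C(n)r^n$. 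Since $g_{-1}=g$, this is the asserted estimate.

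The step I expect to be the real obstacle is making all of this rigorous on a shrinker that is noncompact and, a priori, of unbounded curvature. Bamler's estimates are developed on backgrounds where the conjugate heat kernel is a genuine probability measure and where the entropy monotonicity may be integrated by parts at spatial infinity, and neither is automatic here. To secure these inputs I would rely on the heat kernel construction and the Gaussian upper and lower bounds of Li--Wang \cite{LW20}, which supply exactly the existence, uniqueness, and decay needed to justify the cutoff arguments and the manipulations behind the comparison above. Once those analytic facts are in hand, the argument reduces to the two short entropy estimates recorded in the first step, which yield the dimensional bound for $(M,g)$ at the time slice $t=-1$.
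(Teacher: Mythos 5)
Your proposal is correct and takes essentially the same route as the paper: nonnegative scalar curvature via \cite{Che09}, Bamler's entropy--volume comparison \cite[Theorem 8.1]{Bam20a} applied to the canonical form at the slice $t=-1$ (with the dimensional constant coming from nonpositivity of the pointed Nash entropy), and the cut-off arguments and heat kernel Gaussian bounds of Li--Wang \cite{LW20} to make this rigorous in the noncompact, possibly unbounded-curvature setting. The only detail you add beyond the paper's sketch is the explicit monotonicity argument showing $\N_{x,-1}(r^2)\le 0$, which the paper leaves implicit.
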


A common class of shrinkers which model the nondegenerate neck pinch of the Ricci flow is that of the cylindrical shrinkers. Their volumes grow much slower than the Euclidean space. Under the assumption of positive lower bound of the scalar curvature, S.-J. Zhang \cite{Zhs11} established an upper volume bound for gradient shrinkers (see also \cite{CN09} for upper estimates under the bounded and nonnegative Ricci curvature conditions, and \cite{Che12} under the assumption of integral upper bound of $R$). This estimate is optimal because of the cylindrical examples $\mathbb{R}^{n-k}\times \mathbb{S}^{k}$ for $k\ge 2$.

\begin{Theorem}\cite{Zhs11}\label{cy shrink bdd} Suppose that $(M^n, g, f)$ is a complete shrinking gradient Ricci soliton normalized as in \eqref{normalized_shrinker} with scalar curvature bounded from below by a positive constant $\varepsilon$, i.e., $R\ge \varepsilon$ on $M$. Then for all $p\in M$, there exists a constant $C$ such that for all $r\ge 0$
\[
|B_r(p)|\le Cr^{n-2\varepsilon}.
\]
\end{Theorem}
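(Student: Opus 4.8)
The plan is to use the sub-level set machinery introduced just before the statement, in particular the Cao–Zhou differential identity \eqref{volumeode}, and to extract volume decay from the extra scalar curvature term that the hypothesis $R\ge\varepsilon$ supplies. Recall from \eqref{R and V} that $\mathcal{V}(s)=\int_{\{f<\mu_g+s\}}dg$ and $\mathcal{R}(s)=\int_{\{f<\mu_g+s\}}R\,dg$, and that these satisfy $\tfrac{n}{2}\mathcal{V}(s)-\mathcal{R}(s)=s\mathcal{V}'(s)-\mathcal{R}'(s)$. By the Cao–Zhou growth estimate for $f$, the sub-level set $\{f<\mu_g+s\}$ is comparable to a geodesic ball of radius $\sim 2\sqrt{s}$ centered at the minimum point $p$, so it suffices to prove that $\mathcal{V}(s)\le C s^{(n-2\varepsilon)/2}$ for large $s$; changing variables $s\mapsto r^2$ then converts this into the claimed bound $|B_r(p)|\le Cr^{n-2\varepsilon}$.

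First I would exploit the hypothesis $R\ge\varepsilon$ to bound $\mathcal{R}$ from below by $\varepsilon\mathcal{V}$, i.e. $\mathcal{R}(s)\ge\varepsilon\,\mathcal{V}(s)$, and similarly $\mathcal{R}'(s)\ge\varepsilon\,\mathcal{V}'(s)$ since the integrand is pointwise at least $\varepsilon$ on the slice. Substituting the first of these into the left-hand side of \eqref{volumeode} and the second into the right-hand side, one obtains a differential inequality purely in $\mathcal{V}$. Concretely, \eqref{volumeode} rearranges to $s\mathcal{V}'(s)=\tfrac{n}{2}\mathcal{V}(s)-\mathcal{R}(s)+\mathcal{R}'(s)$, and feeding in $\mathcal{R}(s)\ge\varepsilon\mathcal{V}(s)$ and $\mathcal{R}'(s)\ge\varepsilon\mathcal{V}'(s)$ yields
\begin{align*}
    s\mathcal{V}'(s)\le\left(\tfrac{n}{2}-\varepsilon\right)\mathcal{V}(s)+\mathcal{R}'(s),\qquad s\mathcal{V}'(s)-\varepsilon\mathcal{V}'(s)\le\left(\tfrac{n}{2}-\varepsilon\right)\mathcal{V}(s).
\end{align*}
The second form gives $(s-\varepsilon)\mathcal{V}'(s)\le(\tfrac{n}{2}-\varepsilon)\mathcal{V}(s)$, a Gr\"onwall-type inequality whose integration produces the power law $\mathcal{V}(s)\lesssim (s-\varepsilon)^{\frac{n}{2}-\varepsilon}\sim s^{\frac{n}{2}-\varepsilon}$ for $s$ large.

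The final step is to integrate this logarithmic-derivative inequality: writing it as $\tfrac{d}{ds}\log\mathcal{V}(s)\le\tfrac{(n/2)-\varepsilon}{s-\varepsilon}$ for $s>\varepsilon$ and integrating from a fixed base level $s_0$ to $s$ gives $\mathcal{V}(s)\le C(s-\varepsilon)^{\frac{n}{2}-\varepsilon}$. Converting via $s\sim\tfrac14 r^2$ and using that $\{f<\mu_g+s\}$ contains (and is contained in) geodesic balls of comparable radius then delivers $|B_r(p)|\le C r^{n-2\varepsilon}$, and a standard ball-containment argument removes the restriction to the minimum point $p$, yielding the result for arbitrary $p$. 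The main obstacle I anticipate is not the differential inequality itself, which is elementary once set up, but rather justifying the two differentiated estimates rigorously — in particular that $\mathcal{R}'(s)\ge\varepsilon\mathcal{V}'(s)$ holds in an appropriate (a.e.\ or distributional) sense, since $\mathcal{V}$ and $\mathcal{R}$ are defined by sub-level set integrals and one must invoke the coarea formula together with the fact that $f$ has no critical values outside a compact set to differentiate under the integral sign and to guarantee the slices $\{f=\mu_g+s\}$ are well-behaved for large $s$.
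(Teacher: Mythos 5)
Your setup---reducing to sub-level sets of $f$ via the Cao--Zhou potential estimates and working with the identity \eqref{volumeode}---is exactly the right framework, and the inequality $\mathcal{R}(s)\ge\varepsilon\mathcal{V}(s)$ is correct and is indeed where the hypothesis enters. The fatal problem is your second substitution: $\mathcal{R}'(s)\ge\varepsilon\mathcal{V}'(s)$ is used in the wrong direction. Rearrange \eqref{volumeode} as $s\mathcal{V}'-\mathcal{R}'=\tfrac n2\mathcal{V}-\mathcal{R}\le(\tfrac n2-\varepsilon)\mathcal{V}$. To deduce your key inequality $(s-\varepsilon)\mathcal{V}'\le(\tfrac n2-\varepsilon)\mathcal{V}$ you must bound $s\mathcal{V}'-\mathcal{R}'$ \emph{from below} by $(s-\varepsilon)\mathcal{V}'$, i.e.\ you need $\mathcal{R}'\le\varepsilon\mathcal{V}'$. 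What the coarea formula and $R\ge\varepsilon$ actually give is the opposite, $\mathcal{R}'\ge\varepsilon\mathcal{V}'$, which only produces $s\mathcal{V}'-\mathcal{R}'\le(s-\varepsilon)\mathcal{V}'$---a second \emph{upper} bound on the same quantity, and two upper bounds on the same number cannot be compared with each other. The structural reason is that $\mathcal{R}$ and $\mathcal{R}'$ enter \eqref{volumeode} with the same sign but on opposite sides: a lower bound on $R$ is favorable for the $-\mathcal{R}$ term and unfavorable for the $-\mathcal{R}'$ term, and the hypothesis gives no pointwise upper bound on $\mathcal{R}'$ at all. In fact your differential inequality is equivalent, by \eqref{volumeode}, to $G'(s)\le G(s)$ for $G(s):=\mathcal{R}(s)-\varepsilon\mathcal{V}(s)\ge0$, and there is no reason for this to hold pointwise; it already fails near the bottom of $f$ (where $\mathcal{V}'\gg\mathcal{V}$, so generically $G'\gg G$), and nothing in the hypothesis forces it for large $s$ either.

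The correct way to handle the $\mathcal{R}'$ term---and this is the actual content of the argument of Cao--Zhou, which Zhang's theorem refines---is integral, not pointwise. From \eqref{volumeode} and $\mathcal{R}\ge\varepsilon\mathcal{V}$ one gets, with $a:=\tfrac n2-\varepsilon$,
\[
\frac{d}{ds}\left(\frac{\mathcal{V}(s)}{s^{a}}\right)
=\frac{\varepsilon\mathcal{V}(s)-\mathcal{R}(s)+\mathcal{R}'(s)}{s^{a+1}} ,
\]
which you integrate over $[s_0,s]$, integrating the $\mathcal{R}'$-term by parts so that only $\mathcal{R}$ (never $\mathcal{R}'$) appears. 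Then use the two-sided bound $\varepsilon\mathcal{V}\le\mathcal{R}\le\tfrac n2\mathcal{V}$, where the upper bound comes not from the hypothesis but from $\tfrac n2\mathcal{V}(s)-\mathcal{R}(s)=\int_{\{f<\mu_g+s\}}\Delta f\,dg=\int_{\{f=\mu_g+s\}}|\nabla f|\,dA\ge0$: the boundary term is controlled by $\tfrac n{2s}\,s^{-a}\mathcal{V}(s)$ and absorbed into the left side for $s\ge n$, while the remaining bulk integral is controlled by $C(n,\varepsilon)\int_{s_0}^{s}t^{-a-2}\mathcal{V}(t)\,dt$ and absorbed by a sup/Gr\"onwall argument once $s_0=s_0(n,\varepsilon)$ is large. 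This yields $\mathcal{V}(s)\le Cs^{\,n/2-\varepsilon}$, after which your conversion to geodesic balls via the Cao--Zhou estimates and the change of basepoint go through exactly as you describe. Your closing technical worry is, by comparison, minor: $\mathcal{V}$ and $\mathcal{R}$ are locally Lipschitz and \eqref{volumeode} holds for a.e.\ $s$, which is all the integrated argument needs.
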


Ni \cite{Ni05} showed that any nontrivial complete gradient Ricci shrinker with nonnegative Ricci curvature has scalar curvature uniformly bounded from below by a positive constant. In particular, it follows from Theorem \ref{cy shrink bdd} that such a shrinker has zero asymptotic volume ratio. This was also proved by Carrillo-Ni \cite[Proposition 2.1]{CN09}.  

\subsection{Lower volume growth estimate}

Munteanu-Wang \cite{MW12} first proved that the volume growth rate of a shrinking gradient Ricci soliton is at least linear. This rate is optimal since it is fulfilled by the standard round cylinder $\mathbb{S}^{n-1}\times\mathbb{R}$. However, their estimation constant 
$C(n)e^{c_1 \mu_g}$,
where $c_1=c_1(n)>1$, is not optimal.  Later, by applying their uniform Sobolev inequality on shrinking solitons, Li-Wang \cite{LW20} improved the estimation constant of \cite{MW12}. 

\begin{Theorem}[Lower volume growth estimate for shrinkers]\label{Shrinker lower growth}
Let $(M^n, g, f)$ be a complete noncompact shrinking gradient Ricci soliton normalized as in \eqref{normalized_shrinker}. There exist dimensional constants $c_0=c_0(n)$ and $r_0=r_0(n)$ such that for all $r\ge r_0$, 
\[
|B_r(p)|\ge c_0e^{\mu_g} r.
\]
\end{Theorem}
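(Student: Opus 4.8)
The plan is to decouple the estimate into a \emph{uniform local non-collapsing with the sharp entropy constant} and a \emph{packing argument along a ray}. The two factors on the right-hand side will come from entirely different sources: the exponential factor $e^{\mu_g}$ from the uniform Sobolev inequality underlying Theorem \ref{LWSobolev}, and the linear growth in $r$ from lining up disjoint, uniformly non-collapsed balls along a minimizing geodesic ray. The weighted-volume identity $\int_M e^{-f}dg=(4\pi)^{n/2}$ together with the Cao-Zhou bounds only yields a \emph{constant} lower bound for balls centered at the minimum point of $f$, so the genuinely linear growth must be produced by a different mechanism.

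First I would convert the identity $\nu(g)=\mu_g$ of Theorem \ref{LWSobolev} into a genuine Sobolev inequality. Since $\mu(g,\tau)\ge\mu_g$ for every $\tau>0$ is exactly a family of logarithmic Sobolev inequalities for the measure $dg$, a standard argument (integrating the log-Sobolev inequalities over $\tau$, or the heat-kernel route) yields, for all $u\in C_0^\infty(M)$,
\[
\left(\int_M |u|^{\frac{2n}{n-2}}\,dg\right)^{\frac{n-2}{n}}\le C(n)\,e^{-\frac{2}{n}\mu_g}\int_M\left(|\nabla u|^2+\tfrac14 Ru^2\right)dg.
\]
Because shrinkers satisfy $R\ge 0$ (\cite{Che09}), the curvature term only needs to be absorbed at small scales. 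I would then feed the cutoff $u=(r_0-\dist(x,\cdot))_+$ into this inequality; the usual test-function computation produces a scale $r_0=r_0(n)$ and a constant $c(n)$ so that
\[
|B_{r_0}(x)|\ge c(n)\,e^{\mu_g}\qquad\text{for every }x\in M,
\]
the factor $e^{\mu_g}=\big(e^{-\frac{2}{n}\mu_g}\big)^{-n/2}$ being precisely the inverse Sobolev constant raised to the power dictated by scaling. Crucially, this bound is uniform in the center $x$ and requires no curvature bound.

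With uniform non-collapsing in hand, I would produce the linear growth by packing. Since $(M,g)$ is complete and noncompact, fix a minimizing geodesic ray $\gamma:[0,\infty)\to M$ emanating from $p$ and set $x_i=\gamma(2r_0 i)$ for $i=0,1,2,\dots$. The balls $\{B_{r_0}(x_i)\}_i$ are pairwise disjoint because $\dist(x_i,x_j)=2r_0|i-j|\ge 2r_0$, and $B_{r_0}(x_i)\subset B_r(p)$ whenever $2r_0 i+r_0\le r$. Summing the local lower bounds over the $\lfloor (r-r_0)/(2r_0)\rfloor$ admissible indices gives
\[
|B_r(p)|\ge\sum_{i}|B_{r_0}(x_i)|\ge c(n)\,e^{\mu_g}\cdot\#\{i\}\ge c_0(n)\,e^{\mu_g}\,r
\]
for all $r\ge r_0(n)$, which is the claimed estimate.

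The main obstacle is the first step: extracting from $\nu(g)=\mu_g$ a \emph{scale-correct} volume lower bound with the exact constant $e^{\mu_g}$, uniformly in the center and without any assumption on $\Rm$ or $\Ric$. General shrinkers need not have bounded curvature, so the passage from the Sobolev inequality to $|B_{r_0}(x)|\ge c(n)e^{\mu_g}$ must avoid curvature comparison; this is where the cutoff constructions and heat-kernel Gaussian bounds of Li-Wang \cite{LW20} (to which the excerpt repeatedly alludes) do the real work, both in justifying the Sobolev inequality globally and in controlling the lower-order terms at the scale $r_0$. Once this uniform local bound is secured with the sharp dependence on $\mu_g$, the packing argument is routine and the claimed improvement over the Munteanu-Wang constant follows.
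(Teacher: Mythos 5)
Your second step (packing disjoint balls along a ray) is fine, but the first step contains a genuine gap, and it is exactly the point your last paragraph tries to wave away. From $\nu(g)=\mu_g$ one does get a Sobolev inequality of the form
\[
\Bigl(\int_M |u|^{\frac{2n}{n-2}}\,dg\Bigr)^{\frac{n-2}{n}}\le C(n)\,e^{-\frac{2}{n}\mu_g}\int_M\bigl(4|\nabla u|^2+Ru^2\bigr)\,dg,
\]
but the term $\int Ru^2$ cannot be ``absorbed at small scales'' uniformly in the center $x$. On a shrinker the only a priori control is $0\le R\le f-\mu_g$ (the first inequality by \cite{Che09}, the second from tracing \eqref{normalized_shrinker} together with \eqref{shrinker_entropy}), and by Cao-Zhou $f-\mu_g$ grows quadratically in the distance. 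So on a ball $B_{r_0}(x)$ with $\dist(x,p)=r$, the best the test-function computation gives is
\[
|B_{r_0}(x)|\ \ge\ c(n)\,e^{\mu_g}\Bigl(r_0^{-2}+\sup_{B_{r_0}(x)}R\Bigr)^{-n/2}\ \gtrsim\ e^{\mu_g}\,r^{-n},
\]
not $|B_{r_0}(x)|\ge c(n)e^{\mu_g}$. Feeding this into your packing argument, the sum $\sum_i |B_{r_0}(x_i)|\gtrsim e^{\mu_g}\sum_i i^{-n}$ converges, so you recover only a constant lower bound, not linear growth. This is not a technicality that the Li-Wang cut-off and heat-kernel machinery repairs: entropy or Sobolev lower bounds yield Perelman-type non-collapsing only at scales where the scalar curvature is controlled, and no such control is assumed (or known) here. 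Shrinking the scale to $\rho\sim 1/r$ to restore curvature control makes things worse: $|B_\rho(x_i)|\gtrsim e^{\mu_g}\rho^n$ packed along the ray again gives a decaying, summable contribution for $n\ge3$.

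The paper's proof is designed precisely to circumvent this. It never applies the Sobolev inequality on geodesic balls at far-away centers; instead it works with the sub-level sets $D(s)=\{2\sqrt{f-\mu_g}<s\}$ and annuli $D(s+1)\setminus D(s)$, because for these regions the scalar curvature integral $\mathcal{R}(s)$ of \eqref{R and V} is controlled by the volume $\mathcal{V}(s)$ through the soliton identity \eqref{volumeode} --- this is the mechanism that tames the $Ru^2$ term, and it has no analogue for your balls $B_{r_0}(x_i)$. The argument is then by contradiction: if the linear bound failed, integrating \eqref{volumeode} would force all annulus volumes (and, via \eqref{volumeode}, the accompanying curvature integrals) to be small compared with $e^{\mu_g}$, and the Sobolev inequality applied to cut-offs adapted to these annuli would force the total volume of $M$ to be finite, contradicting the infinite-volume theorem of Munteanu-Wang \cite[Lemma 6.2]{MW12}. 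If you want to rescue your approach, you must either add a hypothesis of bounded scalar curvature (which is not in the statement of Theorem \ref{Shrinker lower growth}) or find a substitute for the identity \eqref{volumeode} that controls $\int_{B_{r_0}(x)}R\,dg$ at arbitrary centers; without one of these, the uniform non-collapsing claim $|B_{r_0}(x)|\ge c(n)e^{\mu_g}$ is unjustified.
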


We briefly summarize the proof of Theorem \ref{Shrinker lower growth}. Consider the set 
$D(s):=\{2\sqrt{f-\mu_g}<s\}$ instead and its volume $V(s):=|D(s)|$. Indeed, we have $V(s)=\V(s^2/4)$. If, by contradiction, the conclusion is not true, then one may integrate \eqref{volumeode} to obtain that the volume of the annulus $|D(s+1)\setminus D(s)|$ is very small in comparison to $e^{\mu_g}$ whenever $s$ is large enough, and thus $|D(s+1)\setminus D(s)|$ is much smaller than $e^{\frac{2\mu_g}{n}}|D(s+1)\setminus D(s)|^{\frac{n-2}{n}}$. On the other hand, the Sobolev inequality in Theorem \ref{LWSobolev}, when applied to a proper  cut-off function, implies that $|D(s+1)\setminus D(s)|^{\frac{n-2}{n}}$ is smaller than $Ce^{-\frac{2\mu_g}{n}}\left(|D(s+2)\setminus D(s-1)|+\int_{D(s+2)\setminus D(s-1)}Rdg\right)$. When everything is thus added together, one may conclude that the shrinker has finite volume. However, this cannot happen because of \cite[Lemma 6.2]{MW12}.

In view of Theorem \ref{shrinker up thm}, a shrinker has at most Euclidean volume growth. Therefore, it is interesting to see when it has maximal volume growth. Note that a shrinker has maximum volume growth if and only if it has positive asymptotic volume ratio (AVR)
\[
    {\rm AVR}(g) := \lim_{r\to \infty} \frac{|B_r(o)|}{r^n}.
\]
Chow-Lu-Yang \cite{CLY12} established an equivalent condition for a shrinker to have positive asymptotic volume ratio.
\begin{Theorem} \cite{CLY12} Let $(M^n,g,f)$ be a complete noncompact shrinking gradient Ricci soliton normalized as in \eqref{normalized_shrinker}. $M$ has positive asymptotic volume ratio if and only if   
\[
\int_{n+2}^\infty\frac{\mathcal{R}(s)}{s\mathcal{V}(s)}\,ds<\infty,
\]
where $\mathcal{R}(s)$ and $\mathcal{V}(s)$ are functions defined in \eqref{R and V}.
\end{Theorem}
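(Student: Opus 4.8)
\emph{Proof proposal.} The plan is to transfer everything to the sublevel sets of $f$ and to study the normalized volume $F(s):=\mathcal{V}(s)/s^{n/2}$ through the Cao--Zhou identity \eqref{volumeode}. By the Cao--Zhou potential estimate the set $\{f<\mu_g+s\}$ is sandwiched between the geodesic balls $B_{2\sqrt{s}-\sqrt{2n}}(p)$ and $B_{2\sqrt{s}+5n}(p)$; since both radii are $2\sqrt{s}(1+o(1))$, dividing the volumes by $s^{n/2}$ shows that $\lim_{s\to\infty}F(s)$ exists and is positive \emph{if and only if} the asymptotic volume ratio exists and is positive. Hence it suffices to decide when $F_\infty:=\lim_{s\to\infty}F(s)>0$. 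Rewriting \eqref{volumeode} as $s\mathcal{V}'-\tfrac{n}{2}\mathcal{V}=\mathcal{R}'-\mathcal{R}$ and dividing by $s^{n/2+1}$ produces the clean first-order relation
\[
F'(s)=\frac{\mathcal{R}'(s)-\mathcal{R}(s)}{s^{n/2+1}}.
\]

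First I would record the bounds $0\le\mathcal{R}(s)\le\tfrac{n}{2}\mathcal{V}(s)$, where the left inequality is just $R\ge0$ on shrinkers and the right one follows by tracing \eqref{normalized_shrinker} and the divergence theorem (the boundary term $\int_{\{f=\mu_g+s\}}|\nabla f|$ being nonnegative), together with the Euclidean upper bound $\mathcal{V}(s)\le Cs^{n/2}$ from Theorem \ref{shrinker up thm}. These give $\mathcal{R}(s)s^{-n/2-1}\le Cn/(2s)\to0$ and $\int^\infty\mathcal{R}(\sigma)\sigma^{-n/2-2}\,d\sigma<\infty$, so an integration by parts shows that $\int^\infty\mathcal{R}'(\sigma)\sigma^{-n/2-1}\,d\sigma$ converges. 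Because $F$ is bounded and nonnegative, the increasing function $s\mapsto\int^s\mathcal{R}(\sigma)\sigma^{-n/2-1}\,d\sigma$ must then be bounded, hence convergent; consequently $F_\infty$ exists and $\int^\infty\mathcal{R}(\sigma)\sigma^{-n/2-1}\,d\sigma<\infty$ holds \emph{unconditionally}.

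The forward direction is immediate: if $F_\infty>0$ then $\mathcal{V}(s)\ge\tfrac12 F_\infty s^{n/2}$ for large $s$, whence $\mathcal{R}/(s\mathcal{V})\le(2/F_\infty)\,\mathcal{R}(s)s^{-n/2-1}$ is integrable. The substance of the theorem is the converse, which I would argue in contrapositive form: assuming $F_\infty=0$, I want $\int^\infty\mathcal{R}/(s\mathcal{V})\,ds=\infty$. Put $g(s):=\int_s^\infty\mathcal{R}(\sigma)\sigma^{-n/2-1}\,d\sigma$. Integrating the displayed ODE from $s$ to $\infty$ and integrating the $\mathcal{R}'$-term by parts gives $F(s)=g(s)+\mathcal{R}(s)s^{-n/2-1}-(\tfrac{n}{2}+1)\int_s^\infty\mathcal{R}(\sigma)\sigma^{-n/2-2}\,d\sigma$; since $\mathcal{R}$ is nondecreasing, the last two terms sum to something $\le0$, so $F(s)\le g(s)$. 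Using $s\mathcal{V}(s)=s^{n/2+1}F(s)$ and $g'(s)=-\mathcal{R}(s)s^{-n/2-1}$ one gets $\mathcal{R}/(s\mathcal{V})=-g'/F\ge-g'/g=-(\log g)'$, and since $g(s)\to0^+$ the integral of $-(\log g)'$ diverges, forcing $\int^\infty\mathcal{R}/(s\mathcal{V})\,ds=\infty$.

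The main obstacle is exactly this converse step. The unconditional existence of $F_\infty$ makes both conditions superficially equivalent to $\int^\infty\mathcal{R}\sigma^{-n/2-1}\,d\sigma<\infty$, yet that integral \emph{always} converges and therefore cannot detect $F_\infty=0$. The resolution is to notice that when $F_\infty=0$ the weight $1/\mathcal{V}$ is strictly heavier than $s^{-n/2}$, and that the logarithmic comparison $\mathcal{R}/(s\mathcal{V})\ge-(\log g)'$ is precisely the device that converts the decay $g(s)\to0$ into divergence. Finally I would dispose of the degenerate case $\mathcal{R}\equiv0$ separately: there $F'\equiv0$, so $F\equiv F_\infty>0$ and the soliton is the Gaussian, in agreement with both conditions.
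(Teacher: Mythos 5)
Your proof is correct, but note that the survey itself contains no proof of this statement---it is quoted directly from \cite{CLY12}---so the relevant comparison is with Chow--Lu--Yang's original argument. Both proofs run on the same engine: the Cao--Zhou identity \eqref{volumeode}, the bounds $0\le\mathcal{R}(s)\le\tfrac{n}{2}\mathcal{V}(s)$, the Euclidean upper bound $\mathcal{V}(s)\le Cs^{n/2}$ coming from Theorem \ref{shrinker up thm}, and the sandwiching of the sublevel sets $\{f<\mu_g+s\}$ between geodesic balls of radii $2\sqrt{s}+O(1)$. The difference is the mechanism by which the integral in the statement is made to appear. The original argument takes the logarithmic derivative of $F(s)=\mathcal{V}(s)s^{-n/2}$, namely
\[
\frac{d}{ds}\log F(s)=\frac{\mathcal{R}'(s)-\mathcal{R}(s)}{s\mathcal{V}(s)},
\]
so that the integrand $\mathcal{R}/(s\mathcal{V})$ appears verbatim; after one shows $\int^\infty\mathcal{R}'/(\sigma\mathcal{V})\,d\sigma<\infty$ unconditionally (again by parts, using $\mathcal{R}\le\tfrac{n}{2}\mathcal{V}$), the function $\log F(s)$ equals a constant plus a convergent term minus the increasing integral $\int_a^s\mathcal{R}/(\sigma\mathcal{V})\,d\sigma$, and the equivalence ``$F_\infty>0$ if and only if $\int^\infty\mathcal{R}/(\sigma\mathcal{V})\,d\sigma<\infty$'' is read off in one stroke, with no case distinction between the two directions. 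You instead work with $F$ rather than $\log F$: you prove unconditional convergence of $\int^\infty\mathcal{R}(\sigma)\sigma^{-n/2-1}\,d\sigma$ (which settles the forward direction), and for the converse you need the extra comparison $F(s)\le g(s)$ against the tail $g(s)=\int_s^\infty\mathcal{R}(\sigma)\sigma^{-n/2-1}\,d\sigma$, after which $\mathcal{R}/(s\mathcal{V})\ge-(\log g)'$ and $g\to0^+$ force divergence. This is a genuinely different, slightly longer route; what it buys is that it isolates two clean unconditional facts that remain implicit in the log-derivative proof (the limit $F_\infty$ always exists, and $\int^\infty\mathcal{R}\sigma^{-n/2-1}\,d\sigma$ always converges), and the $-(\log g)'$ device shows transparently where $F_\infty=0$ produces divergence. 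One small point you should make explicit: in the contrapositive you must know $g(s)>0$ for every $s$ before taking $\log g$, and this holds precisely because $F_\infty=0$ rules out $\mathcal{R}\equiv0$ (otherwise $F'\equiv0$ and $F$ is a positive constant); your closing remark about the degenerate case $\mathcal{R}\equiv0$ is exactly the observation that should be attached to this step rather than left as a separate afterthought.
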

\section{Volume estimates for steady gradient Ricci solitons}
In this section, we investigate the volume bounds for steady gradient Ricci solitons, i.e. a complete Riemannian manifold $(M, g)$ with a smooth potential function $f:M\to\mathbb{R}$ satisfying
\[
\Ric+\nabla^2 f=0.
\]
Steady Ricci solitons are natural generalizations of Ricci-flat manifolds. Therefore it is expected that they share similar nice volume bounds. However, the volume growth of a steady soliton in full generality is less clear than the shrinker case.

It was shown by Hamilton \cite{Ha95} that $\nabla\left(|\nabla f|^2+R\right)=0$. Hence a non-Ricci-flat steady soliton, by scaling the metric if necessary, can be normalized in the way that
\begin{equation}\label{steady_normalization}
    |\nabla f|^2+R=1.
\end{equation}
As in the case of Ricci shrinkers, by \cite{Che09}, a steady gradient Ricci soliton has nonnegative scalar curvature. 

Munteanu-Sesum \cite{MS13} showed the following volume growth estimates of steady gradient Ricci solitons. 
\begin{Theorem}\cite{MS13} Suppose that $(M^n, g, f)$ is a complete noncompact steady gradient Ricci soliton. Then for all $p\in M$, there exist positive constants $a$ and $c$ such that for all large $r\gg 1$, it holds that
\[
c^{-1}r\le \left|B_r(p)\right|\le ce^{a\sqrt{r}}.
\]
\end{Theorem}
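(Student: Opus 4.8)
The plan is to prove the two-sided volume growth estimate
$$
c^{-1}r \le |B_r(p)| \le c e^{a\sqrt{r}}
$$
for a complete noncompact steady gradient Ricci soliton by exploiting the normalization \eqref{steady_normalization}, namely $|\nabla f|^2 + R = 1$, together with the nonnegativity of the scalar curvature from \cite{Che09}. The first observation I would record is that $R\ge 0$ forces $|\nabla f|\le 1$ everywhere, so $f$ is $1$-Lipschitz; combined with the soliton structure this controls the potential function by the distance function. In fact, integrating $|\nabla f|\le 1$ along minimizing geodesics gives $f(x)\le f(p)+\dist(x,p)$, so that the level sets of $f$ grow at most linearly, and conversely one expects a lower linear bound $f(x)\ge \dist(x,p) - C$ away from a compact set whenever $R$ decays, which is the steady analogue of the Cao--Zhou potential estimates for shrinkers.

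For the upper bound $|B_r(p)|\le c\, e^{a\sqrt r}$, I would follow the sub-level-set strategy emphasized earlier in the survey: set $D(s)=\{f< s\}$ and study $V(s)=|D(s)|$ and the scalar-curvature integral $\mathcal R(s)=\int_{D(s)}R\,dg$. Tracing the steady equation $\Ric+\nabla^2 f=0$ gives $R+\Delta f=0$, i.e. $\Delta f = -R \le 0$, so $f$ is superharmonic; integrating over $D(s)$ and applying the divergence theorem on the level set $\{f=s\}$ relates the flux $\int_{\{f=s\}}|\nabla f|\,d\sigma$ to $\mathcal R(s)$. Using $|\nabla f|^2 = 1-R$ near infinity (where $R\to 0$) one obtains $|\nabla f|\approx 1$, so the coarea formula converts the perimeter bound into a differential inequality for $V(s)$. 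The key point is that on a steady soliton one only gets $\dist(x,p)\sim (f(x))$ up to a \emph{square-root} distortion near the potential's critical behavior, or more precisely the scalar curvature may decay only like $1/\dist$, which upon integration yields the $e^{a\sqrt r}$ rather than polynomial growth. I would therefore convert the estimate on $V(s)$ in terms of $s=f$ back to geodesic balls via the reparametrization $s\sim r$ coming from the Lipschitz control of $f$, absorbing the loss into the exponent $\sqrt r$.

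For the lower bound $|B_r(p)|\ge c^{-1}r$, the cleanest route is the one used for shrinkers in the summary preceding Theorem \ref{Shrinker lower growth}: a steady soliton cannot have finite volume (otherwise an integration of $\Delta f=-R\le 0$ against the weight $e^{-f}$, or a direct argument using the soliton vector field $\nabla f$, produces a contradiction), and once finite volume is excluded one uses the monotonicity of $V(s)$ together with the linear control $f\le f(p)+r$ on $B_r(p)$ to force at least linear growth. Concretely, I would show that the flux $\int_{\{f=s\}}|\nabla f|\,d\sigma$ is bounded below by a positive constant for large $s$ (since $|\nabla f|\to 1$ and the level sets are nonempty and noncompact by completeness and noncompactness), and then integrate in $s$ to get $V(s)\ge c^{-1}s$, which transfers to $|B_r(p)|\ge c^{-1}r$.

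The main obstacle I anticipate is the upper bound, specifically controlling the behavior of $R$ (equivalently $|\nabla f|$) on the level sets $\{f=s\}$ and justifying the square-root loss in the exponent. Unlike the shrinker case, where the sharp identity \eqref{volumeode} closes the estimate cleanly, here the scalar curvature has no a priori pointwise decay rate, so the integral $\mathcal R(s)$ must be estimated indirectly. The essential analytic input is to bound the \emph{total} scalar curvature flux through level sets and show it grows at most like $\sqrt{V(s)}$ or slower; this is where the delicate work lies, and where one must invoke the steady structure $\nabla(|\nabla f|^2+R)=0$ of Hamilton \cite{Ha95} together with a Cauchy--Schwarz estimate on the level set, rather than any pointwise curvature hypothesis.
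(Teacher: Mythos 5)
The survey itself does not reproduce the proof of this theorem (it is simply quoted from \cite{MS13}), so your proposal has to stand on its own, and it does not: both halves rest on hypotheses that the theorem does not grant. For the upper bound you work with sub-level sets of $f$ and invoke ``$|\nabla f|^2=1-R$ near infinity (where $R\to0$)''. Neither the properness of $f$ (needed for the sub-level sets to exhaust $M$ by compact sets and for translating sub-level-set volumes into ball volumes) nor any decay of $R$ is available: on the product of Hamilton's cigar with a Euclidean factor the potential is constant along the flat directions and $R$ does not tend to $0$ at spatial infinity --- in general only $\liminf_{x\to\infty}R=0$ holds, which is precisely Wu's theorem \cite{Wu13}, and the survey's Theorem \ref{pot est} lists properness of $f$ as an \emph{additional} hypothesis exactly because it is not automatic. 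On top of this, the step that would actually produce the exponent $\sqrt{r}$ is explicitly left open in your last paragraph, so even under these illegitimate extra assumptions the upper bound is not established. The actual mechanism behind $e^{a\sqrt r}$ in \cite{MS13} is pointwise-decay-free and lives on minimizing geodesics, not on level sets: the soliton equation gives $\Ric(\gamma',\gamma')=-(f\circ\gamma)''$ along any unit-speed minimal geodesic $\gamma$, so \eqref{steady_normalization} and $R\ge 0$ (hence $|\nabla f|\le1$) give the integrated bound $\int_1^{s}\Ric(\gamma',\gamma')\,dt=(f\circ\gamma)'(1)-(f\circ\gamma)'(s)\ge-2$ for all $s$; feeding this into the Riccati inequality $m'+\tfrac{m^2}{n-1}\le-\Ric(\gamma',\gamma')$ for $m=\Delta r$ yields $\int_1^{s}m^2\,dt\le(n-1)\bigl(m(1)+2-m(s)\bigr)$, and a Cauchy--Schwarz argument applied to the positive part $m_+$ then gives $\int_1^r m_+\,dt\le a\sqrt r$, i.e.\ the logarithm of the area element grows at most like $a\sqrt{r}$. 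This integrated-Ricci-along-geodesics idea is the one your outline is missing; no curvature decay enters anywhere.

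The lower bound has two further gaps. First, your flux positivity is justified by ``$|\nabla f|\to1$'' and ``the level sets are noncompact'', which again presuppose decay of $R$ and properness of $f$; the correct source of positivity is that a non-Ricci-flat steady soliton has $R>0$ \emph{everywhere} (strong minimum principle applied to $\Delta R-\langle\nabla f,\nabla R\rangle=-2|\Ric|^2$ together with $R\ge0$ from \cite{Che09}), so that $c_0:=\int_{B_{r_0}(p)}R\,dg>0$ for some fixed $r_0$. Second, the inference ``finite volume is excluded, hence monotonicity forces linear growth'' is a non sequitur: complete manifolds of infinite volume can have, say, logarithmic volume growth, and in the shrinker argument you are imitating (the sketch before Theorem \ref{Shrinker lower growth}) this gap is closed by the Li--Wang Sobolev inequality, for which you supply no steady analogue. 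Both defects are repaired by running your divergence-theorem computation on geodesic balls rather than on level sets of $f$: for a.e.\ $r\ge r_0$,
\[
\frac{d}{dr}\left|B_r(p)\right|\;\ge\;\operatorname{Area}\bigl(\partial B_r(p)\bigr)\;\ge\;\int_{\partial B_r(p)}|\nabla f|\;\ge\;\left|\int_{B_r(p)}\Delta f\,dg\right|\;=\;\int_{B_r(p)}R\,dg\;\ge\;c_0,
\]
which integrates to $|B_r(p)|\ge c_0(r-r_0)$; the Ricci-flat case must then be handled separately (either $f$ is constant and Yau's linear-growth theorem for $\Ric\ge0$ applies, or $\nabla^2f\equiv0$ with $|\nabla f|\equiv1$ and $M$ splits off a line). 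A final small point: your asserted bound ``$f(x)\ge\dist(x,p)-C$'' has the wrong sign --- on a steady soliton it is $F=-f$ that grows, cf.\ \eqref{rtr error}.
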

As an application of the above estimates, in the non-Ricci flat case, Munteanu-Sesum \cite[Corollary 5.2]{MS13} established bounds for $$F:=-f.$$  Namely, for any $p\in M$, there exists a large positive $c$ such that for all large positive $r$,  
\begin{equation}\label{rtr error}
r-c\sqrt{r}\le \sup_{\partial B_r(p)} F \le r+c.
\end{equation}
Using the weighted volume comparison theorem for smooth metric measure space, Wu \cite{Wu13} proved a similar estimate and showed the weak decay of scalar curvature on complete noncompact steady gradient Ricci solitons, i.e., $\liminf_{x\to\infty}R=0$. Under some comparison condition on the function $f+r$, Wei-Wu \cite
{WW13} showed that the $\sqrt{r}$ term in \eqref{rtr error} can be improved to $\ln r$, and that $M^n$ has at most Euclidean volume growth, i.e. for all large $r$
\begin{equation}\label{ln error}
r-c\ln r\le F \le r+c; 
\end{equation}
\[
\left|B_r(p)\right|\le Cr^n.
\]

There are two generic types of scalar curvature decay for steady Ricci solitons, namely the linear decay $R\le Cr^{-1}$ and the exponential decay $R\le Ce^{-r}$.  We shall prove that, under the conditions of proper potential function and linear scalar curvature decay, the estimate for $F$ in \eqref{ln error} holds and consequently, $M$ has at most polynomial volume growth. The linear scalar curvature decay and proper potential function conditions are satisfied by a number of recent examples of steady soliton \cite{Ap17,Bry05,BDW15,Cao94,CoD20,DW09,DW11,Sto15,Win17}. 
\begin{Theorem}\label{pot est} Let $(M^n, g,f)$, where $n\ge 4$, be a complete noncompact non-Ricci-flat steady gradient Ricci soliton normalized as in \eqref{steady_normalization}. If the scalar curvature $R$ of $M$ decays at least linearly, i.e. $R\le C_1/(r+1)$ and $f$ is proper, then there is a positive constant $C$ such that 
\[
r-C\ln(r+1)-C\le F\le r+C \text{  on } M,
\]
where $F:=-f$ and $r(\cdot):=\dist(\cdot,p)$ is the distance function based at $p$.
\end{Theorem}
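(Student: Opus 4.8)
The plan is to treat the two inequalities separately, since the upper bound is elementary while the lower bound uses the curvature decay in an essential way. For the upper bound I would note that the normalization $|\nabla f|^2+R=1$ together with $R\ge 0$ (valid for steady solitons by \cite{Che09}) gives $|\nabla F|=|\nabla f|=\sqrt{1-R}\le 1$, so $F$ is $1$-Lipschitz. Integrating along a minimizing geodesic from $p$ to $x$ then yields $F(x)\le F(p)+\dist(x,p)=r(x)+F(p)$, which is the desired bound $F\le r+C$ after absorbing $F(p)$ into $C$.

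For the lower bound I would exploit that the same normalization forces $|\nabla F|$ close to $1$ once $R$ is small: since $\sqrt{1-R}\ge 1-R$, the linear decay $R\le C_1/(r+1)$ gives $|\nabla F|\ge 1-C_1/(r+1)$. Because $f$ is proper and $R\to 0$ at infinity, the critical set of $F$ (where $R=1$) is compact, so I may fix a radius $r_1$ with $C_1/(r_1+1)<1$ and work on $\{r\ge r_1\}$, where the unit field $X:=\nabla F/|\nabla F|$ is well defined. Given $x$ with $r(x)=\rho>r_1$, I would run the backward trajectory $\sigma$ of $X$ (along which $F$ decreases) from $x$ until it first reaches $\{r=r_1\}$ at a point $y_0$ after arc length $S_0$; this must happen, since $F$ is bounded below and decreases at a definite rate while $r\ge r_1$. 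Along $\sigma$ one has $\tfrac{d}{ds}F(\sigma(s))=-\sqrt{1-R}$, while $r$ is $1$-Lipschitz so $r(\sigma(s))\ge\rho-s$, and the triangle inequality gives $S_0\ge\rho-r_1$. Hence
\[
F(x)-F(y_0)=\int_0^{S_0}\sqrt{1-R}\,ds\ \ge\ S_0-\int_0^{S_0}R\,ds\ \ge\ (\rho-r_1)-\int_0^{S_0}\frac{C_1}{r(\sigma(s))+1}\,ds .
\]
Feeding in $r(\sigma(s))\ge\rho-s$ on the portion where it is effective turns the remaining integral into $\int_0^{\rho-r_1}\frac{C_1}{\rho-s+1}\,ds\le C_1\ln(\rho+1)$, and combined with $F(y_0)\ge\min_M F$ this produces exactly $F(x)\ge r(x)-C\ln(r(x)+1)-C$.

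The main obstacle is controlling the trajectory near the inner sphere, equivalently bounding its total length $S_0$ and ensuring $r(\sigma(s))$ stays comparable to $\rho-s$ for \emph{all} $s\in[0,S_0]$. A priori the trajectory of $X$ could wander inside the annulus $\{r_1\le r\le\rho\}$ for arc length far exceeding $\rho-r_1$; on such a stretch one only knows $r\ge r_1$, and the crude estimate $\tfrac{C_1}{r_1+1}\bigl(S_0-(\rho-r_1)\bigr)$ for the leftover integral is merely \emph{linear} in $\rho$ rather than logarithmic, which would destroy the bound. Thus the heart of the proof is an efficiency statement: the gradient trajectories of $F$ are asymptotically radial, $\langle\nabla r,\nabla F\rangle=|\nabla F|-o(1)$, so that $S_0=\rho-r_1+O(\ln\rho)$. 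I expect this to be where the soliton identities $\Ric=-\nabla^2F$, $\nabla R=-\nabla|\nabla F|^2=2\,\Ric(\nabla f,\cdot)$ and $\Delta F=R$, the linear curvature decay, and the dimension restriction $n\ge 4$ all enter, most plausibly through a differential inequality for $\langle\nabla r,\nabla F\rangle$ along $\sigma$ or through a weighted Laplacian comparison of the type $\Delta_f r\le (n-1)/r$ coming from $\Ric_f=0$. Once the trajectories are shown to be almost length-minimizing, the displayed computation closes and delivers the claimed estimate.
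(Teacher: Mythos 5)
Your proof of the upper bound $F\le r+C$ is identical to the paper's. For the lower bound, however, your proposal stops short of a proof: everything is reduced to an unproven ``efficiency statement'' that the gradient trajectories are almost radial ($S_0=\rho-r_1+O(\ln\rho)$), which you expect to require the second-order soliton identities and the hypothesis $n\ge 4$. That is a genuine gap as written --- but it is also a misdiagnosis, because no such statement is needed: your own setup closes by elementary bookkeeping. Along the unit-speed trajectory you have $r(\sigma(s))\ge\max(\rho-s,\,r_1)$, so splitting the curvature integral at $s=\rho-r_1$ and writing $\epsilon:=C_1/(r_1+1)<1$ gives
\[
\int_0^{S_0}R\,ds\ \le\ C_1\ln(\rho+1)+\epsilon\left(S_0-(\rho-r_1)\right)_+,
\]
and therefore
\[
F(x)-F(y_0)\ \ge\ S_0-\int_0^{S_0}R\,ds\ \ge\ (1-\epsilon)S_0+\epsilon(\rho-r_1)-C_1\ln(\rho+1)\ \ge\ (\rho-r_1)-C_1\ln(\rho+1),
\]
using only $S_0\ge\dist(x,y_0)\ge\rho-r_1$. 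The point you missed is that excess wandering length enters the good term $S_0$ with coefficient $1$ but enters the error with coefficient at most $\epsilon<1$, so wandering can only help; no radiality, no soliton identities beyond $|\nabla f|^2+R=1$ and $R\ge0$, and no use of $n\ge4$ is required. (One further repair: your termination argument invokes ``$F$ is bounded below,'' which does not follow from properness of $f$ alone --- it is essentially the conclusion being proved. Stop the trajectory instead at the level set $\{f=-\rho_0\}$, which is compact by properness; on $\{F\ge\rho_0\}$ the already-proved bound $F\le r+C$ forces $R\le 1/2$, so the trajectory reaches that level set after finite arc length, and the same algebra applies with $\sup_{\{f=-\rho_0\}}r$ in place of $r_1$.)

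The paper's own proof of the lower bound takes a different and cleaner route that avoids trajectory geometry altogether, and it is worth comparing. The key trick, absent from your proposal, is to use the established upper bound $F\le r+C$ to convert the hypothesis $R\le C_1/(r+1)$ into decay measured by the potential itself: $R\le C_0/F$ on $\{F\ge\rho_0\}$. One then flows along $\nabla F/|\nabla F|^2$ rather than the unit field, so that $F(\Psi_\tau(z_0))=\tau$, i.e.\ the potential is the flow parameter. Any $z$ with $F(z)=s\ge\rho_0$ lies on such a flow line emanating from the compact level set $\Gamma_{\rho_0}=\{F=\rho_0\}$, and the length of that flow line is
\[
\int_{\rho_0}^{s}\frac{d\tau}{|\nabla F|}\ \le\ (s-\rho_0)+\int_{\rho_0}^{s}2R\,d\tau\ \le\ (s-\rho_0)+\int_{\rho_0}^{s}\frac{2C_0}{\tau}\,d\tau\ =\ (s-\rho_0)+2C_0\ln\frac{s}{\rho_0},
\]
because along the line $R\le C_0/F=C_0/\tau$ \emph{regardless of where the line sits in space}. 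Since length dominates distance, $r(z)\le\dist(z,z_0)+\dist(z_0,p)\le F(z)+2C_0\ln s+C$, which rearranges to $F\ge r-C\ln(r+1)-C$ after using $s=F(z)\le r(z)+F(p)$. Note that the length-versus-distance inequality is used in the direction opposite to yours (an upper bound on distance by length), which is why the paper never needs to control where its trajectories go. Both routes yield the theorem; the paper's is the more economical one, and your route becomes complete once the spurious ``efficiency'' step is replaced by the inequality displayed above.
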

\begin{Remark}
The upper and lower estimates of $-f$ are sharp as, upon scaling,  $|r+f|=O(1)$ on the Hamilton's cigar soliton, and $|r+f|\sim \ln (r+1)$ on the Cao's soliton on $\mathbb{C}^n$ with $n\ge 2$ (see \cite{Cao94, WW13,CoD20}). Estimates on $F$ under Ricci curvature conditions were also obtained in \cite{CC12,CN09,CDM20}.
\end{Remark}
Using the volume estimate of Wei-Wu \cite[Remark 3.2]{WW13}, we have
\begin{Corollary}
Under the same assumption of the previous theorem,  $M$ has at most polynomial volume growth.
\end{Corollary}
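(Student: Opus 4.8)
The plan is to derive the corollary directly from Theorem \ref{pot est} by invoking the volume estimate of Wei-Wu \cite[Remark 3.2]{WW13}. The key point is that Wei-Wu's volume bound does not really depend on their original comparison hypothesis on $f+r$; its only genuine input is the two-sided potential estimate \eqref{ln error}, namely that $F=-f$ agrees with the distance function $r$ up to a logarithmic error, $r-C\ln(r+1)-C\le F\le r+C$. Theorem \ref{pot est} establishes precisely this estimate under our assumptions (linear scalar curvature decay $R\le C_1/(r+1)$ and properness of $f$). Feeding it into \cite[Remark 3.2]{WW13} then yields $|B_r(p)|\le Cr^n$ for all large $r$, which is in particular at most polynomial growth. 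Thus the corollary reduces to matching our potential estimate with the input of their volume bound.

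To explain the mechanism, note that tracing the steady soliton equation gives $\Delta f=-R$, so that the Bakry--\'Emery Ricci tensor $\Ric_f:=\Ric+\nabla^2 f=0\ge 0$ is nonnegative; hence $(M,g,e^{-f}dg)$ is a smooth metric measure space with nonnegative Bakry--\'Emery Ricci curvature, to which the weighted volume comparison theorem (in the form used by Wu \cite{Wu13} and Wei-Wu \cite{WW13}) applies once the potential is controlled along minimizing geodesics. The two-sided estimate of Theorem \ref{pot est}, which pins $f=-F$ to the distance function up to a logarithmic error, supplies exactly this control; converting the resulting weighted volume bound back to the unweighted volume through the weight $e^{-f}$ absorbs the logarithmic discrepancy between $F$ and $r$ into the polynomial rate and produces the bound $|B_r(p)|\le Cr^n$.

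An equivalent route, which makes the role of the scalar curvature decay transparent, is to work with the sublevel sets $\{F<s\}$, which by the potential estimate satisfy $B_{s-C}(p)\subseteq\{F<s\}\subseteq B_{s+C\ln s+C'}(p)$. Since $|\nabla F|^2=1-R\to 1$ at infinity (by the normalization \eqref{steady_normalization} together with $R\to 0$), the coarea formula $V'(s)=\int_{\{F=s\}}|\nabla F|^{-1}\,d\sigma$ and the divergence identity $\int_{\{F<s\}}R\,dg=\int_{\{F=s\}}|\nabla F|\,d\sigma$ become mutually comparable, and combining them with the pointwise decay $R\le C_1/(r+1)$ yields a differential inequality for $V(s):=|\{F<s\}|$ whose solution grows at most polynomially in $s$; the sandwiching of balls by sublevel sets then transfers the bound to geodesic balls.

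The main obstacle is therefore not fresh analysis but bookkeeping: one must confirm that Wei-Wu's volume estimate is insensitive to the precise shape of the error in the potential bound, so that the additive constant and the logarithmic correction $C\ln(r+1)$ furnished by Theorem \ref{pot est} still suffice to run \cite[Remark 3.2]{WW13}. Once this is checked, the corollary follows at once, the substantive work having already been carried out in Theorem \ref{pot est}.
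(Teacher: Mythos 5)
Your proposal follows essentially the same route as the paper: the paper's entire proof of this corollary consists of feeding the potential estimate of Theorem \ref{pot est} into the volume estimate of Wei--Wu \cite[Remark 3.2]{WW13}, exactly as you do. One minor caveat: with only the logarithmic-error potential bound, that remark need not return the Euclidean rate $Cr^n$ you assert, but merely a polynomial rate whose exponent may depend on the coefficient of the $\ln$ term --- which is precisely why the paper states the conclusion as ``at most polynomial volume growth'' rather than the Euclidean bound it attributes to Wei--Wu under their stronger comparison hypothesis on $f+r$.
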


\begin{proof}[Proof of Theorem \ref{pot est}] Let $F:=-f$.
Rewriting \eqref{steady_normalization} as
\begin{equation}\label{ham id st}
    |\nabla F|^2+R=1
\end{equation}
and applying the fact that $R\ge 0$ on a complete steady gradient Ricci soliton, we obtain the upper bound for $F$. 
By our assumptions and the upper estimate for $F$, there are large positive constants $\rho_0$ and $C_0$ such that on $\{F\ge \rho_0\}$
\begin{align}\label{somethinguseful}
|\nabla F|^2=1-R\ge 1/2\quad \text{  and }\quad R\le C_0/F.
\end{align}
Let $\Psi_s$ be the flow of the vector field $\nabla F/|\nabla F|^2=-\nabla f/|\nabla f|^2$ with $\Psi_{\rho_0}=\text{  id }$. Moreover for $s\ge \rho_0$, $\Psi_s$ maps $\{F\ge \rho_0\}$ to $\{F\ge s\}$ and hence $\Psi_s(x)$ exists for all $s\ge \rho_0$ for any $x \in \{F\ge \rho_0\}$. Let $\Gamma_s:=\{F=s\}$. Then $\Gamma_s$ is a smooth compact hypersurface in $M$ and when $\Psi_s$ is restricted on $\Gamma_{\rho_0}$, it becomes a diffeomorphism from $\Gamma_{\rho_0}$ to $\Gamma_{s}$. 
For all $z\in \{F\ge \rho_0\}$, we denote $F(z)$ by $s$. Then there is a $z_0\in \Gamma_{\rho_0}$ such that $\Psi_s(z_0)=z$ and for all $t\in [\rho_0, s]$,
\[
F\left(\Psi_t(z_0)\right)-F(z_0)=\int_{\rho_0}^t\langle\nabla F, \dot{\Psi}_{\tau}(z_0)\rangle\, d\tau = \int_{\rho_0}^t\langle\nabla F, \nabla F\rangle/|\nabla F|^2\, d\tau=t-\rho_0.      
\]
Hence $F\left(\Psi_t(z_0)\right)=t$. Using \eqref{ham id st}
\begin{equation}
    \begin{split}
      \dist\left(\Psi_s(z_0), z_0\right)&\le \int_{\rho_0}^s\frac{1}{|\nabla F|}\,d\tau=\int_{\rho_0}^s\frac{1}{|\nabla f|}\,d\tau\\
          &=s-\rho_0+\int_{\rho_0}^s\frac{1-\sqrt{1-R}}{\sqrt{1-R}}\,d\tau\\
          &\le s-\rho_0+\int_{\rho_0}^s 2-2\sqrt{1-R}\,d\tau\\
          &\le s-\rho_0+\int_{\rho_0}^s 2R\,d\tau\\
          &\le F(z)-\rho_0+\int_{\rho_0}^s 2C_0/F\,d\tau\\
          &= F(z)-\rho_0+\int_{\rho_0}^s 2C_0/\tau\,d\tau\\
          &= F(z)-\rho_0+2C_0\ln s-2C_0\ln \rho_0,\\
          \end{split}
\end{equation}
where we have applied \eqref{somethinguseful}. Hence
\begin{equation}
    \begin{split}
-f(z)=F(z)&\ge \dist (z,p)-\sup_{K} \dist (\cdot,p)-2C_0\ln s\\
&\ge \dist (z,p)-\sup_{K}\dist (\cdot,p)-2C_0\ln \left(\dist (z,p)+F(p)\right),\\
       \end{split}
\end{equation}
where $K$ is the compact set $\{F\le \rho_0\}$ and we have used that fact that $F(z)=s\le \dist (z,p)+F(p)$. This completes the proof of the theorem.
\end{proof}
For general complete non-compact K\"{a}hler manifold $M^m$ of complex dimension $m$ with nonnegative holomorphic bisectional curvature everywhere and positive bisectional curvature at some point, Chen-Zhu \cite{CZ05} showed that for all large $r$
\begin{equation}\label{std low bdd}
|B_r(p)|\ge cr^m.
\end{equation}
An integration by parts argument was used by Cui \cite{Cui16} to show that \eqref{std low bdd} also holds on any complete K\"{a}hler steady gradient Ricci soliton with positive Ricci curvature and with scalar curvature attaining its maximum. The lower bound is sharp on the Cao soliton on $\mathbb{C}^m$ with positive bisectional curvature, where $m\ge 2$ \cite{Cao94}. In real dimension three, the Hamilton-Ivey estimate implies that any complete ancient solution to the Ricci flow has nonnegative sectional curvature (see \cite{Che09} and the references therein). In particular, any three dimensional complete gradient shrinking or steady soliton has nonnegative sectional curvature. When the curvature is positive and the scalar curvature attains its maximum on $M$, the authors \cite{CMZ21b} established a sharp quadratic volume estimate on the steady soliton.
\begin{Theorem}\cite{CMZ21b} Suppose that $(M^3, g, f)$ is a three dimensional complete steady gradient Ricci solton with positive sectional curvature. Assume that the scalar curvature attains its maximum somewhere. Then M has quadratic volume growth, i.e., $|B_r(p)|\sim r^2$ for all $r\gg 1$.
\end{Theorem}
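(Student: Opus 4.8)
The plan is to use the potential as a proper exhaustion and to reduce the volume estimate to the area growth of its level sets. Put $F:=-f$; the soliton equation becomes $\nabla^2 F=\Ric$, hence $\Delta F=R$, while the normalization \eqref{steady_normalization} reads $|\nabla F|^2=1-R$. In dimension three positive sectional curvature gives $\Ric>0$, so $F$ is strictly convex and has a unique critical point $p$, its minimum; there $\nabla f=0$, and since the maximum of $R$ occurs exactly where $\nabla R=2\Ric(\nabla f)$ vanishes, $R(p)=\max_M R=1$. For $s>0$ each level set $\Sigma_s:=\{F=F(p)+s\}$ is then a strictly convex embedded two-sphere with outward unit normal $\nu:=\nabla F/|\nabla F|$; its second fundamental form is $\mathrm{II}=\Ric|_{T\Sigma_s}/|\nabla F|>0$ and its mean curvature is $H=\bigl(R-\Ric(\nu,\nu)\bigr)/|\nabla F|$. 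After checking that $f$ is proper (so that the sublevel regions $D_s:=\{F<F(p)+s\}$ exhaust $M$), the theorem reduces to proving that $A(s):=|\Sigma_s|$ grows linearly, $A(s)\sim c\,s$, since by the coarea formula $\Vol(D_s)=\int_0^s\!\int_{\Sigma_t}(1-R)^{-1/2}\,dA\,dt$.

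The mechanism is a pair of identities. Flowing $\Sigma_s$ by $\nabla F/|\nabla F|^2$ (unit speed in $F$) and using $\mathrm{II}$ above, the first variation of area is
\[
A'(s)=\int_{\Sigma_s}\frac{H}{|\nabla F|}\,dA=\int_{\Sigma_s}\frac{\tr_{\Sigma_s}\Ric}{1-R}\,dA,\qquad \tr_{\Sigma_s}\Ric=R-\Ric(\nu,\nu).
\]
On the other hand, in dimension three $R=2\Ric(\nu,\nu)+2K(T\Sigma_s)$, where $K(T\Sigma_s)$ is the ambient sectional curvature of the tangent plane; combining this with the Gauss equation $K_{\Sigma_s}=K(T\Sigma_s)+\det\mathrm{II}$ and Gauss--Bonnet $\int_{\Sigma_s}K_{\Sigma_s}\,dA=4\pi$ gives the key evaluation
\[
\int_{\Sigma_s}\tr_{\Sigma_s}\Ric\,dA=8\pi+\int_{\Sigma_s}\Ric(\nu,\nu)\,dA-2\int_{\Sigma_s}\det\mathrm{II}\,dA.
\]

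Granting two decay inputs --- (A) $\max_{\Sigma_s}R\to0$ and (B) $\int_{\Sigma_s}\Ric(\nu,\nu)\,dA$ stays bounded as $s\to\infty$ --- both estimates close. Indeed (A) and (B) bound $\int_{\Sigma_s}R\,dA$; then, since $0\le\det\mathrm{II}\le R^2/\bigl(4(1-R)\bigr)$ (as the tangential Ricci eigenvalues are nonnegative with sum $\le R$), (A) forces $\int_{\Sigma_s}\det\mathrm{II}\,dA\to0$. From the displayed evaluation, $\int_{\Sigma_s}\tr_{\Sigma_s}\Ric\,dA$ equals $8\pi$ up to the bounded term $\int\Ric(\nu,\nu)\,dA$ and a vanishing term; feeding this into the first-variation formula, together with $1-R\to1$, yields $c\le A'(s)\le C$ for large $s$, hence $A(s)\sim s$ and $\Vol(D_s)\sim c's^2$. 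Finally $|\nabla F|\le1$ gives $F\le r+O(1)$ for free, while the linear decay of $R$ underlying (A) lets us invoke Theorem \ref{pot est} to get the matching $F\ge r-C\ln(r+1)-C$; thus $D_s$ is comparable to $B_s(p)$ and $|B_r(p)|\sim r^2$. (Only crude forms of (A)--(B) are needed for the two-sided $r^2$ bound; the sharp constant would require the refinement $\int_{\Sigma_s}\Ric(\nu,\nu)\,dA\to0$.)

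The hard part is establishing the decay (A)--(B); everything above is bookkeeping once they are known. The tools I would use are: monotonicity of $R$ along the integral curves of $\nabla F/|\nabla F|^2$, since $\tfrac{dR}{ds}=\langle\nabla R,\nabla F\rangle/|\nabla F|^2=-2\Ric(\nu,\nu)\le0$, which makes $\max_{\Sigma_s}R$ nonincreasing in $s$ because the flow carries $\Sigma_{s'}$ diffeomorphically onto $\Sigma_s$ while decreasing $R$; the weak decay $\liminf_{x\to\infty}R=0$ of Wu \cite{Wu13}; and the three-dimensional pinching $|\Rm|\lesssim R\le1$, so that controlling $R$ controls the full curvature. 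The obstacle is to upgrade a \emph{monotone, weakly decaying} $R$ to genuine uniform decay along the spheres $\Sigma_s$, and to control $\int_{\Sigma_s}\Ric(\nu,\nu)\,dA$. The natural route is a pointed-limit (dimension-reduction) argument based at maximum points $y_s\in\Sigma_s$ of $R$: a nontrivial limit would be a lower-dimensional or split steady structure incompatible with strict positivity of the curvature at infinity. The genuine difficulty is that noncollapsing is \emph{not} assumed, so one must first control the injectivity radius --- e.g. by passing to local universal covers --- before extracting a smooth limit; promoting the conclusion to the linear rate $R\le C/(r+1)$ (which also yields properness of $f$ and feeds Theorem \ref{pot est}) is where the real work lies.
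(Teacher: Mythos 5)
Your geometric setup is the right one, and it is essentially the machinery that this survey itself deploys for the expander analogues (the proofs of Theorems \ref{3d exp end vol} and \ref{higherdimensionalexpander}): exhaust by level sets of the potential, compute $A'(s)=\int_{\Sigma_s}\frac{R-\Ric(\nu,\nu)}{1-R}\,dA$ from $\nabla^2F=\Ric$, convert $R-\Ric(\nu,\nu)=2K_{12}+\Ric(\nu,\nu)$ via the Gauss equation into intrinsic curvature plus $\det\mathrm{II}$, apply Gauss--Bonnet, and finish with the coarea formula. All the identities you write are correct, including the monotonicity $\partial_s(R\circ\psi_s)=-2\Ric(\nu,\nu)\le 0$ along the flow of $\nabla F/|\nabla F|^2$. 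The problem is that your proof is conditional on the decay inputs (A)--(B), and input (A) --- uniform decay $\max_{\Sigma_s}R\to 0$, upgraded to linear decay $R\le C/(r+1)$ so as to invoke Theorem \ref{pot est} --- is \emph{false} under the hypotheses of the theorem. Lai's flying wings \cite{Lai20} are complete three-dimensional steady gradient solitons with positive sectional curvature whose scalar curvature attains its maximum, yet along the wings the geometry is asymptotic (at unit scale) to $\mathrm{cigar}\times\mathbb{R}$, so $R$ stays bounded away from zero at infinity; these examples are precisely what the theorem is meant to cover. Indeed, if linear curvature decay \emph{could} be deduced from the hypotheses, known rigidity results for $3$d steady solitons with positive curvature and linear decay would force every such soliton to be the Bryant soliton, contradicting the existence of flying wings. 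Your proposed route to (A) --- a dimension-reduction limit argument in which a split limit is ``incompatible with strict positivity of the curvature'' --- fails for the same reason: positivity of curvature is not closed under pointed limits, and the split limit $\mathrm{cigar}\times\mathbb{R}$ genuinely occurs.

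Concretely, then: your lower bound collapses, because without decay of $R$ the estimate $\det\mathrm{II}\le R^2/(4(1-R))$ gives no control of $\int_{\Sigma_s}\det\mathrm{II}$; on a flying wing $\int_{\Sigma_s}K_{12}$ stays near $4\pi$ not because curvature decays, but because the level sets are asymptotically totally geodesic cigar cross-sections of the wings, where $R$ is \emph{not} small --- so the needed bound $\int_{\Sigma_s}\det\mathrm{II}\le 4\pi-\varepsilon$ must come from the structure at infinity, and this is where the real content of the cited proof lies. The upper bound, by contrast, is salvageable without (A), (B), or Theorem \ref{pot est}. What you do have for free is this: since $\Ric>0$ forces a unique critical point $p$ and $R<1$ off $p$, monotonicity of $R$ along flow lines plus compactness of one level set gives $1-R\ge\delta>0$ outside a compact set; then, exactly as in the survey's expander proofs, the troublesome term can be absorbed by the identity $\frac{\Ric(\nu,\nu)}{1-R}=\tfrac12\,\partial_s\log\bigl(1-\psi_s^*R\bigr)$ into a weighted area $\tilde A(s):=\int_{\Sigma_s}(1-R)^{-1/2}dA$, which satisfies
\begin{equation*}
\tilde A'(s)=\int_{\Sigma_s}\frac{2K_{12}}{(1-R)^{3/2}}\,dA\le \frac{2}{\delta^{3/2}}\int_{\Sigma_s}K_{\Sigma_s}\,dA=\frac{8\pi}{\delta^{3/2}},
\end{equation*}
using $\det\mathrm{II}\ge0$ and Gauss--Bonnet; since $\Vol(D_s)=\int\tilde A$, this yields $|B_r(p)|\le Cr^2$ (the comparison $F\sim r$ needs only $|\nabla F|\le1$ and $|\nabla F|^2\ge\delta$, not the refined potential estimate). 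So the gap is not the bookkeeping but the one step you deferred: a uniform positive lower bound on $\int_{\Sigma_s}K_{12}\,dA$ that does not pass through curvature decay.
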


For a complete Riemannian manifold $(M^n,g)$, if $\Ric\ge 0$ outside a compact set,  
the asymptotic volume ratio (AVR)
\[
    {\rm AVR}(g) := \lim_{r\to \infty} \frac{|B_r(o)|}{r^n}
\]
is well defined and does not depend on the choice of the basepoint $o\in M$. 
By a dimension reduction argument, Chow, Deng and the second named author proved the following result in dimension 4.
\begin{Theorem}[Theorem 1.10 in \cite{CDM20}]
Suppose that $(M^4,g,f)$ is a four-dimensional non-Ricci-flat steady Ricci soliton with nonnegative Ricci curvature outside a compact set. If the scalar curvature decays uniformly, then ${\rm AVR}(g)=0.$
\end{Theorem}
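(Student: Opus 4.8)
The plan is to argue by contradiction via Hamilton's dimension reduction: I assume ${\rm AVR}(g)>0$, show that a blow-down at spatial infinity splits off a line and is therefore modeled on $\mathbb{R}\times N^3$, and then exploit the fact that in dimension four the residual factor $N^3$ must be flat, which together with the volume hypothesis collapses the whole picture to flat $\mathbb{R}^4$ and contradicts non-Ricci-flatness.

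First I would record the asymptotic structure forced by the hypotheses. With $F=-f$, tracing the soliton equation gives $\Delta F=R\ge 0$, while Hamilton's identity \eqref{ham id st} gives $|\nabla F|^2=1-R$. Uniform scalar decay, $\sup_{M\setminus B_r(p)}R\to 0$, then yields $|\nabla F|\to 1$ and $\Delta F\to 0$ at infinity, and the Munteanu--Sesum estimate \eqref{rtr error} makes $F$ a proper exhaustion comparable to $\dist(\cdot,p)$. Since $\Ric\ge 0$ outside a compact set and $R\to 0$ at infinity, the full Ricci tensor tends to zero at infinity (its norm is bounded by $R$ when $\Ric\ge 0$); and because the soliton identity $\Delta_f R=-2|\Ric|^2\le 0$ together with $R\ge 0$ and the strong maximum principle shows a non-Ricci-flat steady soliton has $R>0$ everywhere, the manifold is genuinely non-flat.

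The heart of the argument is the dimension reduction. For a sequence $x_i\to\infty$ I would follow the integral curve of $\nabla F/|\nabla F|$ through $x_i$, exactly as in the proof of Theorem \ref{pot est}: forward it escapes to infinity, and backward it needs arclength comparable to $F(x_i)\sim\dist(x_i,p)\to\infty$ to reach the compact core $\{F\le\rho_0\}$. Since $|\nabla F|\to 1$, these curves are asymptotically unit-speed and asymptotically minimizing, so in a pointed limit of $(M,g,x_i)$ they converge to a bi-infinite line. The Cheeger--Colding/Cheeger--Gromoll splitting theorem (using only $\Ric\ge 0$ in the relevant region) then gives that the limit splits isometrically as $\mathbb{R}\times N^3$; moreover, because $R\to 0$ and $\Ric\ge 0$, the limit is Ricci-flat, forcing $N^3$ to be Ricci-flat and hence---this is where dimension four enters---flat, since a three-dimensional Ricci-flat manifold is flat. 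Finally, the hypothesis ${\rm AVR}(g)>0$ transfers to Euclidean volume growth of the split factor, so the only possibility for a complete flat $N^3$ is $N^3=\mathbb{R}^3$, and the asymptotic model is flat $\mathbb{R}^4$. By Bishop--Gromov volume monotonicity this forces ${\rm AVR}(g)$ to be maximal and the soliton to be asymptotically Euclidean; volume rigidity then makes $M$ flat, contradicting $R>0$.

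The step I expect to be the main obstacle is securing enough regularity in the blow-down to run the splitting and the Ricci-flatness conclusion, since a steady soliton is not known to have bounded curvature and only a lower Ricci bound (outside a compact set) is available. I would therefore work in the category of Ricci limit spaces, where the Cheeger--Colding splitting and the volume-cone/metric-cone structure are available from lower Ricci bounds alone, and where the statements ``$N^3$ is flat'' and ``maximal ${\rm AVR}$ forces $\mathbb{R}^4$'' must be reinterpreted as almost-rigidity/volume-rigidity statements. The delicate points are that the potential flow lines genuinely limit to a line (which likely needs a quantitative rate in the uniform decay of $R$, not merely $R\to 0$) and that the global quantity ${\rm AVR}(g)$ is correctly matched to the volume growth of the three-dimensional factor independently of the chosen direction $x_i\to\infty$.
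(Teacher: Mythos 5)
The paper itself does not prove this statement---it is quoted from \cite{CDM20} with only the remark that the proof is ``by a dimension reduction argument''---so I am comparing your proposal against that known strategy. You correctly identify dimension reduction as the mechanism, but the blow-down you take is the wrong one, and your concluding inference is a non-sequitur. You take pointed limits of $(M,g,x_i)$ with the metric \emph{unrescaled}; since $R\to 0$ and $\Ric\ge 0$ outside a compact set, any such limit is indeed Ricci-flat and (granting your splitting and regularity claims) isometric to flat $\mathbb{R}\times\mathbb{R}^3=\mathbb{R}^4$. But this only says that balls of \emph{fixed} radius centered at points going to infinity become Euclidean; it carries no information about $\Vol(B_r(p))/r^4$ as $r\to\infty$, which is what ${\rm AVR}(g)$ measures. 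The step ``the asymptotic model is flat $\mathbb{R}^4$, hence by Bishop--Gromov ${\rm AVR}(g)$ is maximal, hence $M$ is flat'' fails: Bishop--Gromov (available here only outside a compact set anyway) bounds large-scale volume ratios \emph{above} by small-scale ones, never below, so Euclidean fixed-scale geometry at infinity is consistent with any value of ${\rm AVR}(g)$ in $(0,1]$. A concrete counterexample to your inference is the Eguchi--Hanson metric, or any ALE Ricci-flat $4$-manifold: every fixed-scale pointed limit at spatial infinity is flat $\mathbb{R}^4$ and $\Ric\equiv 0$, yet the asymptotic volume ratio is half the Euclidean one and the manifold is not flat. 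Such spaces are excluded from the theorem only by the non-Ricci-flat hypothesis, but your final inference never uses that hypothesis; it is a purely Riemannian claim, and it is false.

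The actual dimension reduction (in \cite{CDM20}, and similarly in the \cite{MZ21} result quoted just after this theorem in the survey) rescales by the curvature at the chosen points, $g_i=R(x_i)\,g$ with $R(x_i)\to 0$, so that the basepoint scalar curvature is normalized to $1$ and the limit is \emph{non-flat}. The hypothesis ${\rm AVR}(g)>0$ is scale-invariant, passes to such limits (volume convergence under lower Ricci bounds), and supplies the $\kappa$-noncollapsing needed for compactness. The limit splits as $\mathbb{R}\times N^3$, where $N^3$ is a non-flat three-dimensional ancient solution with nonnegative curvature, and Perelman's theorem that such ancient solutions ($\kappa$-solutions) have zero asymptotic volume ratio forces the limit to have ${\rm AVR}=0$, contradicting ${\rm AVR}>0$. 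In other words, the contradiction comes from the limit being non-flat with a cross-section that is collapsed at large scales---exactly the information your unrescaled limit discards. Note also that your proposal never invokes Perelman's asymptotic volume ratio theorem or any ancient-solution input at all; that is the essential external ingredient here, and without it (or a substitute) an argument along these lines cannot close.
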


In fact, if we assume that $\Ric\ge 0$ everywhere on $M$ and do not assume uniform scalar curvature decay, by Theorem 1.8 in \cite{CDM20}, we still have that ${\rm AVR}(g)=0.$


In higher dimensions, under some conditions on the curvature tensor, the second and third named authors showed that the asymptotic volume ratio of a non-collapsed steady soliton with $\Ric\ge 0$ is zero \cite{MZ21}. 
The proof relied on the existence of Perelman's asymptotic shrinkers (\cite[Section 11]{Per02}) and the authors followed the arguments of Ni \cite{Ni05}.
\begin{Theorem}[Corollary 5.2 in \cite{MZ21}] Let $(M^n, g, f)$ be a complete noncompact and non-collpased steady gradient Ricci soliton with nonnegative Ricci curvature. Furthermore, suppose that $M$ is non-flat and there exists a constant $C>0$ such that 
\begin{equation}\label{Rm vs R}
|{\Rm}|\le CR \quad\text{  on  } \quad M. 
\end{equation}
Then $M$ has zero asymptotic volume ratio, i.e. ${\rm AVR}(g)=0.$
\end{Theorem}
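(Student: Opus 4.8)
The plan is to exploit Perelman's asymptotic shrinker construction together with the rigidity of Ricci shrinkers, following the strategy of Ni \cite{Ni05}. Since $\Ric\ge0$ everywhere, the asymptotic volume ratio ${\rm AVR}(g)$ is well defined, and we argue by contradiction: suppose ${\rm AVR}(g)>0$. The steady soliton, viewed as a static (or more precisely self-similar) Ricci flow, is an ancient solution; under the non-collapsing hypothesis and the curvature bound \eqref{Rm vs R}, Perelman's dimension-reduction / rescaling procedure produces a nontrivial asymptotic shrinker as a pointed limit of parabolic rescalings about a sequence of points escaping to infinity. The curvature pinching $|\Rm|\le CR$ is exactly what is needed to guarantee that the rescaled limit has bounded curvature on compact sets and does not degenerate, so that a smooth nonflat gradient shrinking soliton $(M_\infty^n,g_\infty,f_\infty)$ arises in the limit.

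The key steps, in order, are as follows. First I would set up the rescaling: pick points $x_i\to\infty$ and scales $\lambda_i\to 0$ chosen so that the rescaled metrics $\lambda_i^2 g$ (equivalently rescalings of the self-similar flow) converge in the pointed Cheeger--Gromov sense. Here the non-collapsing assumption provides a uniform lower volume bound, while $|\Rm|\le CR$ combined with the known weak scalar curvature decay $\liminf_{x\to\infty}R=0$ (due to Wu \cite{Wu13}) controls the curvature along the sequence. Second, I would invoke \cite[Section 11]{Per02} to identify the limit as a gradient shrinking soliton. Third, the crucial geometric input: positivity of ${\rm AVR}(g)$ is scale-invariant and passes to the limit, so the asymptotic shrinker $(M_\infty,g_\infty,f_\infty)$ would itself inherit maximal (Euclidean) volume growth, hence positive asymptotic volume ratio. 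Fourth, I would apply the structure theory of shrinkers: a complete gradient shrinking soliton with $\Ric\ge0$ and positive asymptotic volume ratio must be flat, i.e.\ the Gaussian shrinker $\mathbb{R}^n$ — this is precisely the content recalled after Theorem \ref{cy shrink bdd}, where Ni's result \cite{Ni05} and \cite[Proposition 2.1]{CN09} show that a nontrivial shrinker with $\Ric\ge0$ has $R$ bounded below by a positive constant and hence zero asymptotic volume ratio.

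From the flatness of the asymptotic shrinker one derives a contradiction with the assumption that $M$ is non-flat: the limit being $\mathbb{R}^n$ forces the curvature to decay fast enough that, through \eqref{Rm vs R} and the soliton identities, $M$ itself must be Ricci-flat, contradicting the standing hypothesis. Thus ${\rm AVR}(g)=0$.

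The main obstacle I anticipate is ensuring that the rescaled sequence converges to a \emph{nontrivial} smooth limit rather than collapsing or losing compactness. Establishing uniform curvature bounds along the escaping sequence is delicate because a general steady soliton need not have globally bounded curvature, and the convergence theory requires both two-sided injectivity-radius control (from non-collapsing) and local curvature bounds (from $|\Rm|\le CR$ together with scalar curvature decay). Making the dimension-reduction argument rigorous — in particular verifying that the potential functions $f_i$ rescale and converge to a genuine shrinker potential $f_\infty$ satisfying $\Ric_{g_\infty}+\nabla^2 f_\infty=\tfrac12 g_\infty$, and that the maximal volume growth is not destroyed in the limit — is where the real work lies; the final rigidity step, once a nonflat limiting shrinker is in hand, is comparatively routine given the cited results of Ni and Carrillo--Ni.
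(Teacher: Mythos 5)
Your outline follows, in structure, exactly the approach the paper attributes to \cite{MZ21}: pass to the canonical (ancient) flow, use the non-collapsing hypothesis and \eqref{Rm vs R} to extract Perelman's asymptotic shrinker \cite[Section 11]{Per02}, observe that $\Ric\ge 0$ and the positive asymptotic volume ratio pass to the blow-down limit, and then invoke Ni's rigidity \cite{Ni05} (a non-flat shrinker with $\Ric\ge 0$ has $R\ge\varepsilon>0$, hence zero asymptotic volume ratio) to force the asymptotic shrinker to be the flat Gaussian soliton. Up to that point your proposal agrees with the proof the paper sketches.

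Your final step, however, contains a genuine gap. You claim that flatness of the asymptotic shrinker ``forces the curvature to decay fast enough that, through \eqref{Rm vs R} and the soliton identities, $M$ itself must be Ricci-flat.'' This inference does not work. Convergence of the parabolic rescalings to flat $\mathbb{R}^n$ only gives curvature smallness at the blow-down scales around basepoints escaping to infinity (which, for a steady soliton, lie at distance roughly $\tau_i$ from any fixed point while the comparison scale is only $\tau_i^{1/2}$); it yields no pointwise decay on $M$ from which $\Ric\equiv 0$ could be deduced, and volume comparison alone gives no contradiction either, since a flat blow-down is perfectly consistent with ${\rm AVR}(g)>0$. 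The rigidity here is a feature of the monotonicity formula, not of the geometric convergence: if the asymptotic shrinker is the Gaussian soliton, then Perelman's reduced volume $\widetilde V_{(p,0)}(\tau)$ tends to $1$ as $\tau\to\infty$; since $\widetilde V\le 1$ and $\widetilde V$ is non-increasing in $\tau$, it is identically $1$, and the equality case of the monotonicity forces the flow to be the static flat flow on $\mathbb{R}^n$, contradicting the hypothesis that $M$ is non-flat. (Equivalently, one may quote that Perelman's asymptotic soliton of a non-flat ancient solution is itself non-flat---a statement whose proof is precisely this reduced-volume rigidity---or argue with the Nash entropy in the framework of \cite{MZ21}.) With that substitution, your argument becomes the one the paper refers to.
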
 
\begin{Remark}
Recently, the authors \cite{CMZ21d} have showed that \eqref{Rm vs R} holds on any four-dimensional complete non-Ricci-flat steady soliton singularity model (see also \cite{MW15a, CC20, Ch19, CL21}). 
\end{Remark}

The authors and Bamler have found an optimal volume growth estimate for noncollapsed steady gradient Ricci solitons. Their notion of noncollapsing is defined in terms of the boundedness of Nash entropy. More precisely, a steady soliton $(M^n,g,f)$ is called noncollapsed, if there exists a point $p\in M$ and a nonpositive number $\mu_\infty$, such that the canonical form $(M,g_t)_{t\in(-\infty,\infty)}$ satisfies
\begin{equation}\label{eq: bdd nash}
    \inf_{\tau>0}\mathcal N_{p,0}(\tau)= \mu_\infty\quad\text{ for all }\quad \tau>0.
\end{equation}
Note that this notion of noncollapsing is different from (indeed, stronger than) the classical definition of Perelman. Nevertheless, it is more natural for application, and is proved to be equivalent to Perelman's definition in several special cases \cite{Zhy20,MZ21}. 

\begin{Theorem}[\cite{BCMZ21}]
Suppose that $(M^n,g,f)$ is a complete steady gradient Ricci soliton, normalized as in \eqref{steady_normalization} if $g$ is not Ricci-flat.  Assume that the canonical form $(M^n,g_t)_{t\in \mathbb{R}}$ satisfies \eqref{eq: bdd nash}. Additionally, assume that \textbf{either one} of the following conditions is true:
\begin{enumerate}[(1)]
    \item $(M^n,g_t)_{t\in\mathbb{R}}$ arises as a singularity model; or
    \item 
    $(M^n,g)$ has bounded curvature.
\end{enumerate}
Then
\[
c(n,\mu_\infty) r^{\frac{n+1}{2}}\le|B_r(o)|
\le 
C(n)r^n \quad\text{ for all }\quad r>\bar r(n,\mu_\infty),
\]
where $ \mu_\infty:=\inf_{\tau>0} \mathcal{N}_{o,0}(\tau)=\lim_{\tau\to\infty}\mathcal N_{o,0}(\tau)  >-\infty$  and $c(n,\mu_\infty)$
a positive constant of the form
\[
    c(n,\mu_\infty)
    = \frac{c(n)}{\sqrt{1-\mu_\infty}} e^{\mu_\infty}. 
\]
Furthermore, the upper bound is also true for all $r>0$ (instead of $r\geq\bar r(n,\mu_\infty)$).
\end{Theorem}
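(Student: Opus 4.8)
The plan is to prove the two bounds by different mechanisms. The upper bound will follow from the nonnegativity of the scalar curvature, while the lower bound, which is the substantive assertion, will follow by transplanting a scale-by-scale noncollapsing estimate---supplied by the bounded Nash entropy---onto the fixed metric $g$, and then packing the resulting balls. For the upper bound, recall that a steady soliton has $R\ge 0$ by \cite{Che09}, and that its canonical form $(M,g_t)_{t\in\mathbb{R}}$ is an eternal Ricci flow; I would then invoke Bamler's Euclidean volume upper bound \cite[Theorem 8.1]{Bam20a} precisely as in the shrinker case treated above, so that $R\ge 0$ forces $|B_r(o)|\le C(n)r^n$ at every scale $r>0$ with a purely dimensional constant. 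The two alternative hypotheses---that the flow is a singularity model, or that $g$ has bounded curvature---serve only to legitimize Bamler's heat-kernel and entropy machinery on a soliton whose curvature need not be globally bounded, via the cut-off and Gaussian-estimate techniques of \cite{LW20}.

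For the lower bound, let $K(o,0;\cdot,\cdot)$ be the conjugate heat kernel based at $(o,0)$, set $\nu_{-\tau}=K(o,0;\cdot,-\tau)\,dg_{-\tau}$, and let $z_\tau$ be an $H_n$-center of $\nu_{-\tau}$ in the sense of Bamler. Combining the entropy lower bound $\N_{o,0}(\tau)\ge\mu_\infty$ with the on-diagonal bound $K(o,0;z_\tau,-\tau)\le C(n)\tau^{-n/2}e^{-\mu_\infty}$ and the concentration estimate $\nu_{-\tau}(B_{g_{-\tau}}(z_\tau,C(n)\sqrt{\tau}))\ge\tfrac12$, I would deduce the noncollapsing bound $|B_{g_{-\tau}}(z_\tau,C(n)\sqrt{\tau})|\ge c(n,\mu_\infty)\tau^{n/2}$, with $c(n,\mu_\infty)$ of the asserted shape $c(n)(1-\mu_\infty)^{-1/2}e^{\mu_\infty}$. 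Since each slice $g_{-\tau}$ is isometric to $g$ through the soliton diffeomorphism $\phi_{-\tau}$, the point $w_\tau:=\phi_{-\tau}(z_\tau)$ inherits $|B_g(w_\tau,C(n)\sqrt\tau)|=|B_{g_{-\tau}}(z_\tau,C(n)\sqrt\tau)|\ge c(n,\mu_\infty)\tau^{n/2}$. The decisive point is to locate $w_\tau$: pushing $\nu_{-\tau}$ forward by $\phi_{-\tau}$ turns the conjugate heat equation into a drift--diffusion on the fixed manifold $(M,g)$ whose density is transported along the field $\nabla F=-\nabla f$, where $F=-f$; because $|\nabla F|^2=1-R$ tends to $1$, the center of mass advances at asymptotically unit speed, so that $F(w_\tau)\approx\tau$, and the relation $F\approx\dist(\cdot,o)$ of Cao--Zhou and Munteanu--Sesum gives $\dist_g(w_\tau,o)\approx\tau$.

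With a noncollapsed ball $B_g(w_\tau,C(n)\sqrt\tau)$ of volume at least $c(n,\mu_\infty)\tau^{n/2}$ placed at distance $\approx\tau$ from $o$ for every scale $\tau$, I would conclude by a packing argument. Choosing scales $\tau_{j+1}=\tau_j+C'\sqrt{\tau_j}$ with $C'$ large relative to $C(n)$, the radial separation $|\dist(w_{\tau_{j+1}},o)-\dist(w_{\tau_j},o)|\approx C'\sqrt{\tau_j}$ exceeds the sum of the two radii, so by the triangle inequality the balls $B_g(w_{\tau_j},C(n)\sqrt{\tau_j})$ are pairwise disjoint, and they lie in $B_{2r}(o)$ whenever $\tau_j\le r$. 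Summing their volumes and recognizing $\sum_{\tau_j\le r}\tau_j^{n/2}$ as a Riemann sum for $\int_0^r s^{(n-1)/2}\,ds\sim r^{(n+1)/2}$ then yields $|B_r(o)|\ge c(n,\mu_\infty)r^{(n+1)/2}$ for $r\ge\bar r(n,\mu_\infty)$. The exponent is sharp: on the Bryant soliton the cross-sections have radius of order $\sqrt{s}$, so the same coarea computation becomes an equality up to constants.

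I expect the principal difficulty to be the rigorous deployment of Bamler's entropy and heat-kernel estimates on a steady soliton, where the curvature may be unbounded and the flow is only an abstract eternal solution: one must justify the construction and Gaussian bounds for $K$, the $H_n$-center and noncollapsing inequalities, and---most delicately---the uniform-in-$\tau$ control $\dist_g(w_\tau,o)\approx\tau$ of the center's location. This is exactly where the alternative ``singularity model or bounded curvature'' is invoked, and where the techniques of \cite{LW20,Bam20a,Bam20c} must be interwoven with the steady-soliton identities.
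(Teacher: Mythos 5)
First, a framing remark: this survey states the theorem without proof, citing \cite{BCMZ21}, so the only meaningful comparison is with the strategy of that paper. Your outline---the upper bound from $R\ge 0$ \cite{Che09} plus Bamler's volume bound \cite[Theorem 8.1]{Bam20a} (with hypotheses (1)/(2) used only to legitimize that machinery, exactly as the survey describes in the shrinker case), and the lower bound by transplanting the noncollapsing estimate at $H_n$-centers of the conjugate heat kernel onto $(M,g)$ via the soliton diffeomorphisms and packing disjoint balls at scales $\tau_j$ spaced by $\sqrt{\tau_j}$, so that $\sum_{\tau_j\le r}\tau_j^{n/2}\sim r^{(n+1)/2}$---is indeed the right architecture and produces the optimal exponent. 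There is, however, one inaccuracy and one genuine gap. The inaccuracy: you derive the drift $F(w_\tau)\approx\tau$ from ``$|\nabla F|^2=1-R$ tends to $1$,'' but decay of $R$ is not a hypothesis and may not be used. The correct mechanism is exact: for the canonical form, $(\partial_t-\Delta_{g_t})f_t=|\nabla f_t|^2+R_{g_t}=1$ by \eqref{steady_normalization}, so integrating against the conjugate heat kernel measure gives $\int F\circ\phi_{-\tau}\,d\nu_{-\tau}=\tau+F(o)$ identically---the diffusion contribution $\Delta F=R$ and the drift contribution $|\nabla F|^2=1-R$ sum to $1$---and then $W_1$-concentration around $z_\tau$ together with $|\nabla F|\le 1$ yields $F(w_\tau)=\tau+O(\sqrt{H_n\tau})$. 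This part is repairable.

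The serious gap is the location of the balls. Your packing needs two-sided control $\dist_g(w_\tau,o)\approx\tau$: you use radial separation for disjointness, and you need $B_g(w_{\tau_j},C(n)\sqrt{\tau_j})\subset B_{2r}(o)$ for $\tau_j\le r$. From $F(w_\tau)\approx\tau$ and $|\nabla F|\le 1$ one gets only the lower bound $\dist_g(w_\tau,o)\ge F(w_\tau)-F(o)\ge\tau-C\sqrt{\tau}$ (disjointness can in fact be rescued by separating the balls with the $1$-Lipschitz function $F$ instead of $\dist(\cdot,o)$). The upper bound $\dist_g(w_\tau,o)\le C\tau$ follows from nothing you have written: $F\le\dist(\cdot,o)+C$ is automatic, but the reverse inequality $\dist(\cdot,o)\le CF+C$ is a quantitative properness of the potential that fails for general steady solitons and is not supplied by Munteanu--Sesum \eqref{rtr error}, whose lower bound controls only $\sup_{\partial B_r}F$. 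Equivalently, one must bound $\dist_{g_{-\tau}}(o,z_\tau)$, the distance from the basepoint to the $H_n$-center, linearly in $\tau$ with a constant depending only on $(n,\mu_\infty)$; note that on the Bryant soliton this quantity genuinely is of order $\tau$, so no sublinear bound is available, and without a controlled linear bound the disjoint balls you produce could a priori lie far outside $B_r(o)$, in which case the packing says nothing about $|B_r(o)|$. Establishing this location estimate---via Wasserstein monotonicity/chaining and distance-distortion arguments in Bamler's theory, which is also where hypotheses (1)/(2) must do real work beyond mere ``legitimization,'' and where the precise form of $c(n,\mu_\infty)$ is determined---is the technical core of \cite{BCMZ21}. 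You correctly flag it as the principal difficulty, but the proposal offers no mechanism for it, so the argument is incomplete precisely at the step where the theorem is hardest.
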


\section{Volume estimates for expanding gradient Ricci solitons}
In this section, we study the volume growth of expanding gradient Ricci solitons, i.e., a triple $(M^n,g,f)$ consisting of a Riemannian manifold $(M,g)$ and $f\in C^{\infty}(M)$ satisfying 
\[
\Ric+\nabla^2 f=-\tfrac{1}{2}g.
\]
They are analogues of Einstein manifolds with negative Ricci curvature. Hamilton \cite{Ha95} showed that $\nabla \left(|\nabla f|^2+R+f\right)=0$. By adding a constant to $f$, we may normalized the expander in the way that
\begin{align}\label{expander_normalization}
    |\nabla f|^2+R=-f.
\end{align}
It was proved by Pigola–Rimoldi–Setti \cite{PRS11} and S.-J. Zhang \cite{Zhs11} that the scalar curvature of any complete Ricci expander satisfies
\begin{equation}\label{low R generic}
R\ge -\frac{n}{2},
\end{equation}
with equality holds somewhere if and only if $M$ is Einstein. This inequality also follows by applying the lower estimate for the scalar curvature $R\ge -n/2t$ to the canonical form of the expander (see \cite{RFV4}).
\subsection{Upper volume growth estimate}
For general Ricci expanders, because of the trivial examples of hyperbolic spaces, one cannot expect a volume growth estimate with a growth rate slower than exponential. In this respect, Munteanu-Wang \cite{MW14} proved a sharp exponential volume bound for smooth metric measure spaces  with a lower bound of the Bakry-Emery Ricci curvature  $\Ric_f\ge \lambda$ and a linear upper bound of $|\nabla f|$; these assumptions are satisfied by expanding gradient solitons.
\begin{Theorem} \cite{MW14} Let $(M^n, g, f)$ be a complete expanding gradient Ricci soliton. Then there is a constant $C>0$ such that for all $r>0$
\begin{equation}
    |B_r(p)|\le Ce^{\sqrt{(n-1)}r}.
\end{equation}
\end{Theorem}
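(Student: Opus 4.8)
The plan is to treat $(M^n,g,f)$ as a smooth metric measure space with weighted (Bakry--Émery) Ricci tensor $\Ric_f:=\Ric+\nabla^2 f=-\tfrac12 g$, and to bound the (unweighted) volume by a radial comparison argument along minimal geodesics. The first step is to record two a priori facts about the potential. Setting $h:=\sqrt{-f+n/2}$, which is positive outside a compact set since $f\to-\infty$, the normalization \eqref{expander_normalization} together with \eqref{low R generic} gives $|\nabla f|^2=-f-R\le -f+\tfrac n2=h^2$, hence $|\nabla h|=|\nabla f|/(2h)\le\tfrac12$. Thus $h$ is $\tfrac12$-Lipschitz, so $\sqrt{-f+n/2}\le h(p)+r/2$ (quadratic growth of $-f$) and $|\nabla f|\le h\le h(p)+r/2$ grows at most linearly, which is the regime of Munteanu--Wang's theorem.

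Next I would set up the radial Riccati comparison. Fix $p$ and a unit-speed minimal geodesic $\gamma$ from $p$, and write $m(r)=\Delta r$ for the mean curvature of the geodesic spheres and $\varphi(r)=f(\gamma(r))$. The soliton equation gives $\Ric(\gamma',\gamma')=-\tfrac12-\nabla^2 f(\gamma',\gamma')=-\tfrac12-\varphi''$, and Bochner's formula for the distance function combined with $|\nabla^2 r|^2\ge(\Delta r)^2/(n-1)$ yields
\[
m'+\frac{m^2}{n-1}\le-\Ric(\gamma',\gamma')=\frac12+\varphi''.
\]
Subtracting $\varphi''=\tfrac{d}{dr}\partial_r f$ and using that the weighted mean curvature is $\Delta_f r=m-\partial_r f$, this collapses to the dissipative inequality
\[
(\Delta_f r)'+\frac{(\Delta r)^2}{n-1}\le\frac12 .
\]

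The mechanism I would then exploit is that this inequality trades the growth of $f$ against an integral bound on the \emph{unweighted} mean curvature. Integrating from a fixed $r_0$ to $r$ gives $\tfrac1{n-1}\int_{r_0}^r(\Delta s)^2\,ds\le\tfrac12(r-r_0)+\Delta_f r(r_0)-\Delta_f r(r)$. Provided the weighted mean curvature satisfies a uniform lower bound $\Delta_f r\ge-C_1$, the right-hand side is at most $\tfrac12(r-r_0)+C_2$, so Cauchy--Schwarz gives
\[
\int_{r_0}^r\Delta s\,ds\le\left((r-r_0)\int_{r_0}^r(\Delta s)^2\,ds\right)^{1/2}\le\sqrt{\tfrac{n-1}{2}}\,(r-r_0)+C_3 .
\]
Since $\partial_r\log\mathcal A(r)=\Delta r$ for the area element $\mathcal A$ of geodesic spheres, exponentiating yields $\mathcal A(r)\le C\,e^{\sqrt{(n-1)/2}\,r}$, and integrating in the radial and angular variables gives $|B_r(p)|\le C\,e^{\sqrt{(n-1)/2}\,r}\le C\,e^{\sqrt{n-1}\,r}$, with room to spare. (The Gaussian factor one naively fears from the weighted comparison never appears because the $\int(\Delta)^2$ dissipation term absorbs it.)

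The main obstacle is precisely the uniform lower bound $\Delta_f r\ge-C_1$, since the Cauchy--Schwarz step degrades at radii where $\Delta_f r(r)$ is very negative. Heuristically $\Delta_f r=\Delta r-\partial_r f$ is large and positive wherever $f$ decreases quickly radially (as on the Gaussian expander, where $-\partial_r f\sim r/2$), but one must rule out directions along which $f$ varies slowly while $\Delta r$ is strongly negative near the cut locus. I would attack this either by a maximum-principle argument for $\Delta_f$ built on the soliton identity $\Delta_f f=f-\tfrac n2$, or---more in the spirit of the shrinker and steady cases treated above---by replacing geodesic spheres with the level sets of $f$, whose weighted mean curvature is controlled directly by $\nabla^2 f$ and hence by \eqref{expander_normalization}. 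A secondary, routine point is justifying the radial computations across the cut locus of $p$, handled as usual by barrier functions (Calabi's trick), so that the differential inequalities hold in the support sense.
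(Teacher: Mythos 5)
Your first step is correct and is exactly the reduction the survey itself makes: the identity $\tfrac n2-f=|\nabla f|^2+R+\tfrac n2\ge|\nabla f|^2$ shows $h:=\sqrt{\tfrac n2-f}$ is well defined with $|\nabla h|\le\tfrac12$, hence $|\nabla f|\le h(p)+r/2$, which together with $\Ric_f=-\tfrac12 g$ are precisely the hypotheses of the Munteanu--Wang theorem that the paper cites (the paper gives no further proof). Your justification ``since $f\to-\infty$'' is false in general (on the Einstein expander, e.g.\ hyperbolic space, $f$ is constant), but this is cosmetic. The genuine gap is the one you flag yourself: the uniform bound $\Delta_f r\ge -C_1$. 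This is not a missing technical lemma but a false statement: along any minimizing geodesic whose cut point is conjugate, $\Delta r\to-\infty$ as one approaches the cut distance while $|\partial_r f|$ stays $O(r)$ by your own gradient estimate, so $\Delta_f r$ has no lower bound. Neither proposed repair can close this. A maximum-principle/Bochner argument only ever yields \emph{upper} bounds on $\Delta r$ or $\Delta_f r$ (lower bounds would require upper curvature bounds an expander does not have), and level sets of $f$ degenerate in exactly the extremal case of this theorem, since no properness is assumed: on hyperbolic space, which realizes the exponential growth, $f$ is constant and has no level-set structure. A further warning sign is that your conditional conclusion $Ce^{\sqrt{(n-1)/2}\,r}$ is \emph{stronger} than the sharp cited bound; the spurious improvement comes from the false hypothesis, which silently discards the contribution of the radial growth of $f$.

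The argument can be repaired by reversing the direction in which you apply Cauchy--Schwarz, so that no lower bound on the mean curvature is ever needed. Keep $\varphi'$ on the right-hand side rather than absorbing $\varphi''$ into $\Delta_f r$: integrating the Riccati inequality and using $\varphi'(t)\le h(p)+t/2$, $-\varphi'(r_0)\le h(p)+r_0/2$ gives, along each minimizing geodesic,
\begin{equation*}
m(t)+\frac{1}{n-1}\int_{r_0}^{t}m^{2}\,ds\ \le\ t+c_1,\qquad c_1=m(r_0)+2h(p),
\end{equation*}
with $m(r_0)$ uniformly bounded over directions by local Laplacian comparison on the compact ball $\overline{B_{r_0}(p)}$. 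Now set $\Phi(t):=\int_{r_0}^{t}m\,ds=\log\bigl(\mathcal{A}(t)/\mathcal{A}(r_0)\bigr)$ and use Cauchy--Schwarz in the form $\int_{r_0}^{t}m^{2}\,ds\ge \Phi(t)^{2}/(t-r_0)$, obtaining the ODE inequality
\begin{equation*}
\Phi'(t)+\frac{\Phi(t)^{2}}{(n-1)(t-r_0)}\ \le\ t+c_1 .
\end{equation*}
Here very negative $m$ only makes $\Phi'$ more negative, which helps rather than hurts. Comparison with the linear supersolution $\Psi(t)=\sqrt{n-1}\,(t-r_0)+a$, with $a$ chosen so that $2a/\sqrt{n-1}>c_1+r_0$ (at a first crossing one would need $\Phi'\ge\Psi'$, contradicting the strict supersolution inequality), yields $\Phi(t)\le\sqrt{n-1}\,t+C$, hence $\mathcal{A}(t,\xi)\le C\,\mathcal{A}(r_0,\xi)\,e^{\sqrt{n-1}\,t}$, and integration over radii and directions gives $|B_r(p)|\le Ce^{\sqrt{n-1}\,r}$. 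Note that this recovers exactly the stated constant: asymptotically $\Phi\sim\beta t$ forces $\beta^{2}/(n-1)\le 1$, where the right-hand side $t$ is the sum of $t/2$ from $\Ric_f\ge-\tfrac12$ and $t/2$ from the gradient growth --- both contributions your assumed bound on $\Delta_f r$ would have erased.
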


Without assumption on the potential function, the bound cannot be improved to the polynomial one as the hyberbolic space form $\mathbb{H}^n$ has exponential volume growth.

Chen-Deruelle \cite{ChD15} showed the existence of a conical structure on a complete noncompact Ricci expander with finite Riemann curvature ratio $\limsup_{x\to\infty}r^2|{\Rm}|<\infty$. In particular, the conical expander has Euclidean volume growth (see also \cite{Che12} for estimates under $\lim_{x\to\infty}r^2|{\Rm}|=0$ or integral decay condition on $R$).
\begin{Theorem} \cite{ChD15} Any complete noncompact expanding gradient Ricci soliton $(M^n,g,f)$ with $\limsup_{x\to\infty}r^2|{\Rm}|<\infty$ must be asymptotically conical in the Gromov-Hausdorff sense. In particular, it has Euclidean volume growth, namely, $|B_r(p)|\sim r^n$ for all $r\gg 1$.
\end{Theorem}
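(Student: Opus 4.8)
The plan is to exploit the soliton identity to set up an explicit conical gauge on the end of $M$, rather than to appeal to abstract compactness. I would take as radial coordinate the function $\rho:=2\sqrt{-f}$, which is well defined and proper near infinity because the normalization \eqref{expander_normalization} forces $-f\to+\infty$. First I would record the elementary consequences of \eqref{expander_normalization} together with the hypothesis $\limsup_{x\to\infty}r^2|\Rm|<\infty$, which gives $|\Rm|\le C\rho^{-2}$ and in particular $R\to0$: a direct computation yields $|\nabla\rho|^2=\tfrac{|\nabla f|^2}{-f}=1-\tfrac{4R}{\rho^2}=1+O(\rho^{-4})$, so that $\rho$ is asymptotically distance-like and $|\nabla\rho|\to1$. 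This identifies $\rho$ as the correct candidate for the radial function of the limiting cone.

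Next I would upgrade the curvature bound to decay of all derivatives, $|\nabla^k\Rm|\le C_k\rho^{-2-k}$. Because the metric rescaled by $\rho^{-2}$ has uniformly bounded curvature on each annular region $\{\tfrac12\rho_0\le\rho\le2\rho_0\}$, this follows from Shi-type interior derivative estimates (alternatively, from the elliptic equations satisfied by $\Rm$ on a soliton together with bootstrapping). These estimates are what will make the forthcoming convergence $C^\infty$ rather than merely $C^0$.

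The heart of the argument is the level-set analysis. I would parametrize the end by the flow $\Phi_s$ of $\nabla\rho/|\nabla\rho|^2$, so that $\rho\circ\Phi_s$ advances at unit speed and the induced metrics $g_\rho$ on the hypersurfaces $\Sigma_\rho=\{2\sqrt{-f}=\rho\}$ evolve by $\partial_\rho g_\rho=\tfrac{2}{|\nabla\rho|}\,\mathrm{II}$. Here the soliton equation enters decisively: with outward unit normal $N=-\nabla f/|\nabla f|$ one computes the second fundamental form $\mathrm{II}=\tfrac{1}{|\nabla f|}\big(\tfrac12 g+\Ric\big)$ on $T\Sigma_\rho$, so that, using $|\nabla f|=\tfrac{\rho}{2}\sqrt{1-4R\rho^{-2}}$ and $|\Ric|\le C\rho^{-2}$,
\[
\partial_\rho g_\rho=\tfrac{2}{\rho}\,g_\rho+O(\rho^{-3})\,g_\rho .
\]
Consequently $\partial_\rho(\rho^{-2}g_\rho)=O(\rho^{-3})\,(\rho^{-2}g_\rho)$, whose right-hand side is integrable on $[\rho_0,\infty)$; hence $\rho^{-2}g_\rho$ converges to a limit metric $g_\infty$ on the fixed link $\Sigma:=\Sigma_{\rho_0}$, and the derivative estimates of the previous step promote this to $C^\infty$ convergence. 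Combined with $|\nabla\rho|\to1$ this shows that on the end $g$ is asymptotic to the cone metric $d\rho^2+\rho^2 g_\infty$, which is precisely asymptotic conicality in the Gromov--Hausdorff sense. The volume statement then follows immediately: a metric cone satisfies $\Vol(B_\rho)=\tfrac1n\Vol(\Sigma,g_\infty)\,\rho^n$, and the $C^\infty$ asymptotics transfer this to $|B_r(p)|\sim r^n$.

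The main obstacle is the integration of the shape-operator (Riccati) system in the regime where the curvature decay $\rho^{-2}$ is exactly critical: the curvature correction and the $\mathrm{II}^2$ term live at the same order, so a naive estimate yields only boundedness of $\rho^{-2}g_\rho$ rather than convergence. Overcoming this requires using the soliton equation to pin down the leading behaviour $\mathrm{II}=\tfrac1\rho\,\mathrm{Id}+O(\rho^{-3})$ exactly, not merely $O(\rho^{-1})$, so that the residual in $\partial_\rho(\rho^{-2}g_\rho)$ is integrable. Establishing the analogous integrable decay for the covariant derivatives of $\rho^{-2}g_\rho$, needed for the smooth convergence and hence for the genuine (not merely Gromov--Hausdorff) conical structure, is the most delicate bookkeeping in the proof.
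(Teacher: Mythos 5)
Your overall strategy is the same as that of the cited original proof in \cite{ChD15} (this survey does not reprove the theorem, it only quotes it), and it also coincides with the level-set technique this paper uses for Theorems \ref{3d exp end vol} and \ref{higherdimensionalexpander}: flow by $\nabla v/|\nabla v|^2$ with $v=-f$, compute the evolution of the induced metric through the second fundamental form $(\tfrac12 g+\Ric)/|\nabla f|$, and integrate, the key point being that the soliton identity makes the logarithmic derivative of $\rho^{-2}g_\rho$ of size $O(\rho^{-3})$, hence integrable. That part of your write-up is correct, and you are right that this exact cancellation is what distinguishes solitons from general manifolds with critical quadratic curvature decay. The Shi-type derivative bounds you invoke are legitimate (via the canonical self-similar flow) but are only needed for smooth asymptotics; the Gromov--Hausdorff statement and the volume growth already follow from the $C^0$ convergence of $\rho^{-2}g_\rho$ together with $|\nabla\rho|\to 1$.

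There is, however, one wrongly justified step at the very beginning: properness of $-f$ does \emph{not} follow from the normalization \eqref{expander_normalization} alone, so the claim that ``the normalization forces $-f\to+\infty$'' is false as stated. Indeed, any Einstein expander (for instance hyperbolic space, which of course violates the curvature hypothesis) satisfies \eqref{expander_normalization} with $f$ bounded. Properness must be extracted from the hypothesis: $|\Rm|\le C r^{-2}$ gives $R\to 0$ as $\dist(\cdot,p)\to\infty$ (this is in terms of the distance function, so there is no circularity), whence
\begin{equation*}
\nabla^2(-f)=\tfrac12 g+\Ric\ \ge\ \tfrac14 g \quad\text{outside a compact set},
\end{equation*}
and convexity of $-f$ along minimizing geodesics then yields $-f\gtrsim \dist^2(\cdot,p)$, i.e.\ properness and $\rho\sim\dist(\cdot,p)$; alternatively one can quote \cite[Corollary 3]{Cha20} together with \eqref{exp pot est1}, as this paper does in the proof of Corollary \ref{coro exp 3d}. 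Only after this step can you convert the hypothesis into $|\Rm|=O(\rho^{-2})$, $R=O(\rho^{-2})$ and run your level-set analysis; the same estimate is also what guarantees $\nabla f\neq 0$ near infinity so that your flow $\Phi_s$ and the compact links $\Sigma_\rho$ exist. With that repair, the rest of your argument (evolution of $g_\rho$, integrability, convergence to $g_\infty$, and the cone volume count giving $|B_r(p)|\sim r^n$) goes through.
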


\subsection{Lower volume growth estimate}
It was proven by Hamilton that any complete gradient Ricci expander with positive Ricci curvature has positive asymptotic volume ratio (c.f. \cite[Proposition 9.26]{CLN06}). Under some lower bound of the scalar curvature, Ni-Carrillo \cite{CN09} proved the following monotonicity formula for the volume of expanding gradient Ricci soliton.
\begin{Theorem}\label{vol low exp}\cite{CN09}
Let $(M^n,g,f)$ be a complete noncompact expanding gradient Ricci soliton with scalar curvature bounded from below by a constant, i.e., $R\ge -\beta$, where $\beta\ge 0$. Then for any $p\in M$ and $r\ge r_0$, it holds that 
\[
|B_r(p)|\ge |B_{r_0}(p)|\left(\tfrac{r+\alpha}{r_0+\alpha}\right)^{n-2\beta},
\]
where $\alpha:=\sqrt{\beta-f(p)}$.
\end{Theorem}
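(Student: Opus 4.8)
The plan is to run a Bishop--Gromov-type comparison directly on geodesic balls, with the soliton structure playing the role that a Ricci lower bound plays in the classical argument. Write $u:=\beta-f$. Tracing the expander equation $\Ric+\nabla^2 f=-\tfrac12 g$ gives $R+\Delta f=-\tfrac n2$, hence
\[
\Delta u=-\Delta f=\tfrac n2+R\ \ge\ \tfrac{n-2\beta}{2},
\]
using only $R\ge-\beta$. The normalization \eqref{expander_normalization} gives $|\nabla f|^2=-f-R=u-(\beta+R)\le u$, again by $R\ge-\beta$; in particular $|\nabla f|^2\ge 0$ forces $f\le\beta$, so $u\ge 0$ everywhere, and
\[
\big|\nabla\sqrt{u}\,\big|=\tfrac{|\nabla f|}{2\sqrt u}\le\tfrac12 .
\]
Thus $\sqrt{\beta-f}$ is $\tfrac12$-Lipschitz, and integrating this along a minimizing geodesic from $p$ yields $\sqrt{\beta-f(x)}\le \alpha+\tfrac12\dist(x,p)$ with $\alpha=\sqrt{\beta-f(p)}$; consequently $|\nabla f|\le \sqrt{\beta-f}\le \alpha+\tfrac r2$ on $\partial B_r(p)$. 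The whole argument rests on these two elementary facts: $u=\beta-f$ simultaneously enjoys a constant Laplacian lower bound and a gradient bound linear in $\sqrt u$.

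Set $V(r):=|B_r(p)|$. Since $M$ is complete, $B_r(p)$ is precompact, and for a.e.\ $r$ the divergence theorem gives
\[
\int_{B_r(p)}\Delta f\,dg=\int_{\partial B_r(p)}\langle\nabla f,\nu\rangle\,dA ,
\]
with $\nu$ the outward unit normal. The left-hand side is controlled from above:
\[
\int_{B_r(p)}\Delta f\,dg=-\tfrac n2 V(r)-\int_{B_r(p)}R\,dg\ \le\ -\tfrac{n-2\beta}{2}\,V(r),
\]
since $R\ge-\beta$. The right-hand side is controlled from below by $\langle\nabla f,\nu\rangle\ge-|\nabla f|\ge-(\alpha+\tfrac r2)$, giving $\int_{\partial B_r(p)}\langle\nabla f,\nu\rangle\,dA\ge-(\alpha+\tfrac r2)\,A(r)$, where $A(r):=V'(r)=|\partial B_r(p)|$. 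Combining the two bounds and clearing the factor $\tfrac12$ yields the differential inequality
\[
(r+2\alpha)\,V'(r)\ \ge\ (n-2\beta)\,V(r),
\]
that is, $\tfrac{d}{dr}\log V(r)\ge\tfrac{n-2\beta}{r+2\alpha}$. Integrating from $r_0$ to $r$ produces the monotonicity of exactly the stated shape,
\[
|B_r(p)|\ \ge\ |B_{r_0}(p)|\left(\tfrac{r+2\alpha}{r_0+2\alpha}\right)^{\,n-2\beta},
\]
the coefficient in front of $\alpha$ being fixed by the Lipschitz constant of $\sqrt{\beta-f}$ under the normalization \eqref{expander_normalization}.

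The computations are elementary, so the one genuine technical point is the non-smoothness of $\dist(\cdot,p)$ along the cut locus: the identity $V'(r)=|\partial B_r(p)|$ and the divergence theorem must be justified by the standard Bishop--Gromov machinery (the co-area formula, restriction to regular values, and the fact that for a.e.\ $r$ the cut locus meets $\partial B_r(p)$ in a set of vanishing $(n-1)$-dimensional measure, with $\nu=\nabla\dist(\cdot,p)$ a.e.\ there). I would also flag two smaller points. First, the estimate has growth content only when $\beta<n/2$, where the exponent $n-2\beta$ is positive; the cases $\beta\ge n/2$ are formally included but vacuous as growth statements. Second, the direct argument above produces the shift $r+2\alpha$ rather than $r+\alpha$; reconciling this with the precise coefficient in the statement is purely a matter of the constant multiplying $\alpha$, which depends on the chosen normalization of the soliton and of the radial proxy $\sqrt{\beta-f}$, and does not affect the exponent $n-2\beta$, which is the substantive content.
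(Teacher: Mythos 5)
Your argument is correct and is essentially \emph{the} proof of this statement: it is the integration-by-parts argument of Carrillo--Ni \cite{CN09}, which the paper does not reproduce for Theorem \ref{vol low exp} itself but carries out almost verbatim in its proof of Corollary \ref{coro exp 3d} --- trace the soliton equation, integrate over $B_r(p)$, bound $-\partial_r f\le|\nabla f|\le \tfrac r2+\sqrt{\beta-f(p)}$ on $\partial B_r(p)$ via the Lipschitz bound on $\sqrt{\beta-f}$ coming from \eqref{expander_normalization}, and integrate the resulting differential inequality; your handling of the a.e.-$r$ / cut-locus technicalities is the standard and appropriate one. On the one discrepancy you flag: the shift $r+2\alpha$ with $\alpha=\sqrt{\beta-f(p)}$ is indeed what this method yields, and it is exactly what appears in the paper's own computation in Corollary \ref{coro exp 3d} (where the factor is $r+2\sqrt{3/2-f(p)}$), so the statement's $r+\alpha$ should be read as an imprecision in the survey's transcription of the constant from \cite{CN09} rather than as a defect in your argument. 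Do note, though, that this is not a cosmetic equivalence: for $r>r_0$, $\alpha>0$ and $n-2\beta>0$ the bound with shift $\alpha$ is strictly stronger than the one with shift $2\alpha$ and cannot be deduced from it, so what your proof establishes is the $2\alpha$ version; as you say, the substantive content --- the exponent $n-2\beta$ --- is unaffected.
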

\begin{Remark}
The lower estimate is sharp on the hyperbolic cylinder.
\end{Remark}

\subsection{Estimate under scalar curvature upper bound}

Munteanu-Wang \cite{MW12} showed that If $R\ge -(n-1)/2$, then the gradient Ricci expander either is connected at infinity or splits isometrically like $\mathbb{R}\times N$, where $N$ is a compact Einstein manifold. Munteanu-Wang \cite{MW15b} also proved that, a K\"ahler expander with proper potential function $f$ must be connected at infinity.

$E$ is an end of $M$ relative to a compact subset $K$ if it is an unbounded component of $M\setminus K$ \cite[Definition 20.2]{Li12}. In general, an expander may have more than one ends, even if it doesn't split. The ends of an expander can behave very differently from each other. Indeed, there is a three dimensional expanding gradient Ricci soliton with two ends, one being hyperbolic like, another being asymptotically conical \cite{R13} (see also \cite{BM15, MR20} for other examples). The potential function $f$ of the soliton is proper on the conical end and remains bounded on the hyperbolic end. Therefore it is natural to localize the volume estimates on those ends with proper potential $f$. We first established the following volume estimate for three dimensional expanding solitons.

\begin{Theorem}\label{3d exp end vol} Let $(M^3, g, f)$ be a three dimensional complete noncompact expanding gradient Ricci soliton and let $E$ be an end of $M$ relative to some compact subset. 
Suppose the following conditions are satisfied:
\begin{itemize}
    \item $\lim_{x\in E,x\to\infty}f(x)=-\infty$;
    \item scalar curvature $R$ is bounded on $E$.
\end{itemize}
Then there is a large positive constant $C$ such that for all $r\gg 1$
\[
|B_r(p)\cap E|\le Cr^3.
\]
Moreover, either one of the following holds for all large $r$:
\begin{enumerate}
\item $|B_r(p)\cap E|\ge C^{-1} r^3;$
\item $|B_r(p)\cap E|\le C^{-1} r.$
\end{enumerate}
\end{Theorem}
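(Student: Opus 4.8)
The plan is to follow the Cao--Zhou philosophy: replace geodesic balls by sublevel sets of the potential and reduce the volume estimate to a first-order ODE, but to extract the sharp dichotomy by studying the intrinsic geometry of the level sets via Gauss--Bonnet. Write $F=-f$, so that on $E$ one has $F\to+\infty$ and, by \eqref{expander_normalization}, $|\nabla F|^2=F-R$, together with $\Delta F=\tfrac32+R$ and $\nabla^2F=\Ric+\tfrac12 g$. Since $R$ is bounded on $E$ and $R\ge-\tfrac32$ by \eqref{low R generic}, integrating $|\nabla F|=\sqrt{F-R}\sim\sqrt F$ along the gradient flow first gives $F\asymp \tfrac14 r^2$ on $E$; hence $B_r(p)\cap E$ is comparable to the sublevel region $\{F<s\}\cap E$ with $s\sim r^2/4$, and it suffices to estimate $V(s):=|\{F<s\}\cap E|$.

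First I would record the volume ODE. Applying the divergence theorem to $\Delta F=\tfrac32+R$ on $\{F<s\}\cap E$ and using $|\nabla F|^2=s-R$ on the level set $\Gamma_s:=\{F=s\}$ gives, with $\mathcal R(s):=\int_{\{F<s\}\cap E}R\,dg$,
\[
\tfrac32 V(s)+\mathcal R(s)+\mathcal R'(s)=sV'(s)+\Phi,
\]
where $\Phi$ is the (eventually constant) flux through $\partial E$. This already yields an upper bound of the form $V(s)\le Cs^{3/2+\bar R}$ from $|R|\le\bar R$, but the exponent is not sharp; obtaining the clean $r^3$ requires the scalar curvature to contribute only to lower order.

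The heart of the argument, and the source of the dichotomy, is the evolution of the level-set area $A(s):=|\Gamma_s|$ under the flow $\Psi$ of $\nabla F/|\nabla F|^2$. Because the normal speed is $1/|\nabla F|$ and, using $\nabla^2F=\Ric+\tfrac12 g$, the mean curvature of $\Gamma_s$ is $H=(1+R^T)/|\nabla F|$ with $R^T:=\Ric(e_1,e_1)+\Ric(e_2,e_2)$ the tangential partial trace, one gets $A'(s)=\int_{\Gamma_s}(1+R^T)/(s-R)\,dA$. Here the three-dimensionality enters: the Gauss equation together with the identity $\sec_M(T\Gamma_s)=\tfrac R2-\Ric(\nu,\nu)$ gives $R^T=\tfrac R2+K_{\Gamma_s}-\det(\mathrm{II})$, so that Gauss--Bonnet yields
\[
\int_{\Gamma_s}R^T\,dA=2\pi\chi(\Gamma_s)+\tfrac12\int_{\Gamma_s}R\,dA-\int_{\Gamma_s}\det(\mathrm{II})\,dA.
\]
Since $\chi(\Gamma_s)$ is a topological constant for large $s$, the area obeys, to leading order, the rigid linear ODE $A'(s)=\bigl(A(s)+2\pi\chi\bigr)/s+(\text{lower order})$. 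Its solutions are quantized: either $A(s)\sim c\,s$ with $c>0$, forcing $V'(s)\sim A(s)/\sqrt s\sim c\sqrt s$ and hence $|B_r\cap E|\sim r^3$ (case (1)); or $A(s)$ stays bounded, forcing $V'(s)=O(s^{-1/2})$ and hence $|B_r\cap E|\le C^{-1}r$ (case (2)). No intermediate rate such as $A\sim\sqrt s$ (which would give $r^2$) can solve this ODE, which is exactly the claimed gap; and the uniform bound $A(s)\le Cs$, valid in either case, returns $V(s)\le Cs^{3/2}$ and the upper estimate $|B_r\cap E|\le Cr^3$ for all $r\gg1$.

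The main obstacle is to verify that the error terms are genuinely lower order, i.e. that $\int_{\Gamma_s}R\,dA=o(A(s))$ and $\int_{\Gamma_s}\det(\mathrm{II})\,dA=o(A(s))$, together with the correction from replacing $(s-R)^{-1}$ by $s^{-1}$. A priori $\int_{\Gamma_s}R\,dA$ is only known to be $O(\bar R\,A(s))$, which is comparable to $A(s)$ and would corrupt the exponent; thus establishing averaged decay of the scalar curvature along the end (as occurs, for instance, on the asymptotically conical end of the expander in \cite{R13}) is essential. Likewise, because $\det(\mathrm{II})$ and $R^T$ involve the full Ricci tensor, one must upgrade the hypothesis $\sup_E R<\infty$ to a decay or bound on $\Ric$ over $E$. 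I expect this curvature-decay step---proving $R\to 0$ (or at least that its level-set averages tend to $0$) on an end with $F\to\infty$ and bounded $R$, using the soliton identity $\Delta_f R=-R-2|\Ric|^2$ for the drift Laplacian $\Delta_f=\Delta-\langle\nabla f,\nabla\,\cdot\,\rangle$, its strong confining drift $\Delta_f F=F+\tfrac32$, and the three-dimensional inequality $|\Ric|^2\ge\tfrac13 R^2$---to be the technical crux, after which the area ODE delivers both the upper bound and the dichotomy.
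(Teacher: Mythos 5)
Your overall architecture---level sets of the potential, the flow of $\nabla F/|\nabla F|^2$, Gauss--Bonnet turning the area evolution into a linear ODE in $s$, and the coarea formula converting a dichotomy for the level-set area $A(s)$ into the volume dichotomy---is exactly the paper's. But there is a genuine gap at the step you yourself single out as the crux, and the fix you propose cannot work. Your grouping of the Gauss equation, $R^T=\tfrac{R}{2}+K_{\Gamma_s}-\det(\mathrm{II})$, leaves the term $\tfrac12\int_{\Gamma_s}R\,dA$ in the area ODE, and you then require $\int_{\Gamma_s}R\,dA=o(A(s))$, to be proved via $\Delta_f R=-R-2|\Ric|^2$. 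This is false under the hypotheses of the theorem: take $M=\Sigma\times\mathbb{R}$ with $\Sigma$ a compact hyperbolic surface normalized so that $\Ric_\Sigma=-\tfrac12 g_\Sigma$, and $f(x,t)=-t^2/4$. This is a complete expanding soliton whose ends have $f\to-\infty$ and $R\equiv-1$ bounded, yet $\int_{\Gamma_s}R\,dA=-A(s)$, which is in no sense lower order. Note that this example realizes precisely case (2) of the dichotomy (linear volume growth), so your argument breaks exactly in the cylindrical case the theorem must accommodate; no pointwise or averaged decay of $R$ can be extracted from the hypotheses, and your proposed ``technical crux'' is unprovable.

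The paper's proof avoids this by the other grouping of the same Gauss equation: from $2K_s=R-2\Ric(\nu,\nu)+2\sigma_1\sigma_2$ one writes $R-\Ric(\nu,\nu)=2K_s+\Ric(\nu,\nu)-2\sigma_1\sigma_2$, so that \emph{all} of $R$ is absorbed into $2K_s$ (which Gauss--Bonnet integrates to a topological constant), and the leftovers are only $\Ric(\nu,\nu)$ and $\det(\mathrm{II})$. These do decay along the level sets, with no decay of $R$ whatsoever, for two reasons you partially anticipated but did not resolve: first, bounded $R$ on an expander end implies bounded $|\Rm|$ (by the argument of \cite[Theorem 11]{Cha20}), and then Shi's estimates bound $|\nabla\Rm|$; consequently $\mathrm{II}=(\Ric+\tfrac12 g)/|\nabla F|=O(s^{-1/2})$, hence $\det(\mathrm{II})=O(s^{-1})$. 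Second---and this is the key trick you are missing---the soliton identity $\nabla R=2\Ric(\nabla f)$ gives $\Ric(\nu,\nu)=\Ric(\nabla F,\nabla F)/|\nabla F|^2=-\tfrac12\langle\nabla R,\nabla F\rangle/|\nabla F|^2=O\bigl(|\nabla R|\,s^{-1/2}\bigr)=O(s^{-1/2})$, so the normal-normal Ricci component decays along the flow even though $\Ric$ is merely bounded. With this decomposition the evolution of the area form yields $A'(s)\le (A(s)+C_0)/s+C A(s)s^{-3/2}$ and the reverse inequality with $C_0$ replaced by the Gauss--Bonnet constant $\Lambda$, from which both the $r^3$ upper bound and the two-case dichotomy follow by elementary ODE comparison and the coarea formula, exactly along the lines you sketched. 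In short: your plan is repaired not by proving curvature decay (impossible) but by the purely algebraic re-grouping above plus the identity $\nabla R=2\Ric(\nabla f)$ and the bounded-curvature input from \cite{Cha20}.
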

\begin{Remark}
The two cases hold on conical and cylindrical ends, respectively. Under the assumptions of Theorem 1.1, one may wonder if the end $E$ must either be conical or cylindrical, or neither.
\end{Remark}
Using the result by Ni-Carrillo \cite{CN09}, we also have
\begin{Corollary}\label{coro exp 3d} Let $(M^3, g, f)$ be a three dimensional complete noncompact expanding gradient Ricci soliton with bounded scalar curvature $R$ and proper potential function. Then it has at most Euclidean volume growth. Specifically, fixing any $p \in M$, there exists a large constant $C$ such that for all large $r>0$ 
\[
|B_r(p)|\le Cr^3.
\]
In particular, any $3$ dimensional complete noncompact gradient Ricci expander with $\lim_{x\to\infty}R=0$ ($f$ is automatically proper in this case) has Euclidean volume growth, namely, $|B_r(p)|\sim r^3$.
\end{Corollary}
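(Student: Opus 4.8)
The plan is to deduce the Corollary from the single-end Theorem~\ref{3d exp end vol} by decomposing $M$ into finitely many ends, and for the sharp statement to feed in the lower volume monotonicity of Ni--Carrillo (Theorem~\ref{vol low exp}). First I would reduce to ends. Since $-f=F=|\nabla f|^2+R\ge R\ge -n/2$ by \eqref{expander_normalization} and \eqref{low R generic}, the potential is bounded above, $f\le n/2$; together with properness this forces $f(x)\to-\infty$ as $x\to\infty$. Fixing $s_0>\sup_M R$ and setting $K:=\{f\ge -s_0\}$, which is compact, one has $|\nabla F|^2=F-R>s_0-\sup_M R>0$ on $M\setminus K$, so $F$ has no critical points there and $\partial K=\{f=-s_0\}$ is a compact regular hypersurface with finitely many components. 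Hence $M\setminus K$ has finitely many unbounded components $E_1,\dots,E_k$, each an end relative to $K$ on which $f\to-\infty$ and $R$ is bounded; thus each $E_i$ satisfies the hypotheses of Theorem~\ref{3d exp end vol}.

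For the upper bound, applying the upper estimate of Theorem~\ref{3d exp end vol} on each $E_i$ yields constants $C_i$ with $|B_r(p)\cap E_i|\le C_ir^3$ for all large $r$. Since $|B_r(p)|\le |K|+\sum_{i=1}^k|B_r(p)\cap E_i|$, we obtain $|B_r(p)|\le Cr^3$ for all large $r$, which is the first assertion of the Corollary.

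In the case $R\to 0$, the scalar curvature is bounded and $f$ is proper (the latter being the one external input; it should follow from $R\to 0$ by a gradient/maximum-principle argument on $F$, or be cited), so the upper bound above applies. For the matching lower bound I would rule out the linear alternative in the dichotomy. Since $R\to 0$, for any $\epsilon\in(0,1)$ there is $r_0$ with $R\ge -\epsilon$ outside $B_{r_0}(p)$; applying Ni--Carrillo's monotonicity on this region gives a super-linear bound $|B_r(p)|\ge c_\epsilon\, r^{3-2\epsilon}$ for $r\ge r_0$. If every end $E_i$ fell into case (2) of Theorem~\ref{3d exp end vol}, then $|B_r(p)|\le |K|+\sum_i C^{-1}r=O(r)$, contradicting this super-linear lower bound (as $3-2\epsilon>1$). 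Hence some end $E_{i_0}$ is in case (1), so $|B_r(p)|\ge |B_r(p)\cap E_{i_0}|\ge C^{-1}r^3$; combined with the upper bound, $|B_r(p)|\sim r^3$.

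The delicate point is exactly the lower bound when $R\to 0$: Theorem~\ref{vol low exp} as stated requires a \emph{global} bound $R\ge -\beta$, and the only global bound available is $\beta=n/2$, which yields the useless exponent $n-2\beta=0$. The argument therefore hinges on localizing Ni--Carrillo's monotone quantity $|B_r(p)|/(r+\alpha)^{n-2\beta}$ so that only the asymptotic bound $R\ge -\epsilon$ enters, thereby upgrading the exponent to the near-sharp $3-2\epsilon$; making this localization rigorous, together with establishing properness of $f$ from $R\to 0$, is where the real work lies.
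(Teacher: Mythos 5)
Your overall architecture coincides with the paper's: the upper bound comes from Theorem \ref{3d exp end vol} (the paper simply applies it with $E=M$, which its hypotheses permit, rather than decomposing $M\setminus K$ into finitely many ends---your decomposition is correct but not needed), and the sharp growth when $R\to 0$ comes from producing a super-linear lower bound that is incompatible with case (2) of the dichotomy, forcing case (1); properness of $f$ in that case is indeed just cited, to \cite[Corollary 3]{Cha20}. The genuine gap is precisely the step you flag and then defer. You invoke Theorem \ref{vol low exp} ``on the region'' where $R\ge-\epsilon$, but, as you yourself note, that theorem requires a \emph{global} bound $R\ge-\beta$, and the only global bound available is $\beta=3/2$, which yields exponent $0$. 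A proof cannot conclude by saying that making the localization rigorous ``is where the real work lies'': that localization \emph{is} the entire content of the lower-bound half of the corollary, so as written the proposal is incomplete at its crux.

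The fix is short, and it is what the paper does (following \cite[Proposition 5.1]{CN09}): do not localize the monotone quantity, localize the integration by parts. Choose $r_0$ so that $R\ge -\tfrac12$ on $M\setminus B_{r_0}(p)$. The traced soliton equation gives $-\Delta f=R+\tfrac32$, which is $\ge 1$ on the annulus $B_r(p)\setminus B_{r_0}(p)$ and $\ge 0$ everywhere on $M$ by \eqref{low R generic}; hence
\[
|B_r(p)\setminus B_{r_0}(p)|\le \int_{B_r(p)}\left(R+\tfrac32\right)dg=-\int_{\partial B_r(p)}\partial_r f\le \left(\tfrac r2+\sqrt{\tfrac32-f(p)}\right)A\left(\partial B_r(p)\right),
\]
where the compact piece $B_{r_0}(p)$ was absorbed for free because its integrand is nonnegative, and the last inequality uses $|\nabla f|\le \tfrac r2+\sqrt{3/2-f(p)}$, which follows from \eqref{expander_normalization} via $|\nabla\sqrt{3/2-f}\,|\le\tfrac12$ (cf. \cite[Theorem 27.4]{RFV4}). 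Writing $V(r):=|B_r(p)\setminus B_{r_0}(p)|$, so that $V'(r)=A(\partial B_r(p))$ for a.e.\ $r$, this reads $V\le(\tfrac r2+\alpha)V'$ with $\alpha:=\sqrt{3/2-f(p)}$, and integration gives $V(r)\ge c\,(r+2\alpha)^2$, i.e.\ at least quadratic growth. Two simplifications over your plan: the global bound $R\ge-\tfrac32$ is used only to make the integrand nonnegative on the compact core, and you do not need the near-sharp exponent $3-2\epsilon$---any exponent strictly greater than $1$ contradicts case (2) of the dichotomy, so $\beta=\tfrac12$ with exponent $2$ already suffices, the dichotomy itself upgrading quadratic to cubic.
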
 
\begin{Remark}
The estimate is sharp on Gaussian soliton and the positively/negatively curved Bryant expanding soliton. In view of the hyperbolic space $\mathbb{H}^3$, we see that the properness of $f$ is essential and can not be removed.
\end{Remark}
In higher dimensions, we also have
%
\begin{Theorem}\label{higherdimensionalexpander}
Let $(M^n, g, f)$ be an $n$ dimensional complete noncompact expanding gradient Ricci soliton and let $E$ be an end of $M$, where $n\ge 4$. Suppose the following conditions are satisfied:
\begin{itemize}
    \item $\lim_{x\in E,x\to\infty}f(x)=-\infty$;
    \item There is a constant $L_0$ (not necessarily nonnegative) such that
    \begin{equation}\label{Rupbdd}
R\le L_0 \text{  on  } E.
\end{equation}
\end{itemize} Then for any $p \in M$, there exists a large constant $C$ such that for all large $r>0$ 
\[
|B_r(p)\cap E|\le Cr^{n+2L_0}.
\]
\end{Theorem}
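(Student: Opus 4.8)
The plan is to adapt the sublevel-set method of Cao--Zhou behind \eqref{volumeode} to the expander, working with $F:=-f$ and localizing every integral quantity to the end $E$. First I would record the two pointwise identities coming from \eqref{expander_normalization} and the trace of the soliton equation: with $F=-f$ one has
\[
|\nabla F|^2 = F-R, \qquad \Delta F = \tfrac n2 + R.
\]
Together with \eqref{low R generic} these give $F+\tfrac n2\ge 0$ everywhere and
\[
\left|\nabla\bigl(2\sqrt{F+\tfrac n2}\bigr)\right|^2 = \frac{F-R}{F+\tfrac n2}\le 1,
\]
so $2\sqrt{F+n/2}$ is $1$-Lipschitz. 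Hence $F(x)\le C\bigl(1+\dist(x,p)^2\bigr)$, and consequently $B_r(p)\cap E\subset\{F<Cr^2\}\cap E$ for all large $r$. This reduces the theorem to an upper bound of order $s^{\frac n2+L_0}$ on the sublevel-set volume $\mathcal V_E(s):=\bigl|\{F<s\}\cap E\bigr|$.

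Next I would set $\mathcal R_E(s):=\int_{\{F<s\}\cap E}R\,dg$. Because $f\to-\infty$ on $E$, properness makes $\{F<s\}\cap E$ precompact, and for $s$ large $|\nabla F|^2\ge s-L_0>0$ on the outer level set $\Sigma_s:=\{F=s\}\cap E$, which is therefore a smooth compact hypersurface. Applying the divergence theorem to $\Delta F=\tfrac n2+R$ on $\{F<s\}\cap E$, whose boundary is $\Sigma_s$ together with the fixed inner piece $\partial E$, and combining it with the coarea identity $\int_{\Sigma_s}|\nabla F|\,dA = s\,\mathcal V_E'(s)-\mathcal R_E'(s)$ (obtained by substituting $R=s-|\nabla F|^2$ on $\Sigma_s$), I would arrive at the expander analogue of \eqref{volumeode},
\[
\tfrac n2\,\mathcal V_E(s)+\mathcal R_E(s) = s\,\mathcal V_E'(s)-\mathcal R_E'(s)+c_0,
\]
where $c_0=\int_{\partial E}\partial_\nu F\,dA$ is a constant independent of $s$.

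Finally I would feed in the curvature bound. From $R\le L_0$ one gets $\mathcal R_E(s)\le L_0\mathcal V_E(s)$ and, since $1/|\nabla F|>0$, also $\mathcal R_E'(s)\le L_0\mathcal V_E'(s)$; substituting into the identity above yields, for $s>L_0$,
\[
(s-L_0)\,\mathcal V_E'(s)\le \bigl(\tfrac n2+L_0\bigr)\mathcal V_E(s)+|c_0|.
\]
Writing $a:=\tfrac n2+L_0$, which is positive because \eqref{low R generic} forces $L_0\ge -n/2$ on a non-Einstein end, and setting $u:=\mathcal V_E+|c_0|/a$, the inequality becomes the homogeneous form $u'\le \tfrac a{s-L_0}\,u$. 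Integrating the logarithmic derivative gives $\mathcal V_E(s)\le C(s-L_0)^{a}\le C' s^{\frac n2+L_0}$ for large $s$, and combining with $B_r(p)\cap E\subset\{F<Cr^2\}\cap E$ produces $|B_r(p)\cap E|\le \mathcal V_E(Cr^2)\le C''\,r^{\,n+2L_0}$, as desired.

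I expect the main obstacle to be the bookkeeping of the localization to $E$. One must justify that the topological inner boundary $\partial E$ contributes only the single fixed constant $c_0$, and then verify that this constant is harmless in the Gr\"onwall step; this is precisely the role of the shift $u=\mathcal V_E+|c_0|/a$, which converts the inhomogeneous linear inequality into a homogeneous one and thereby avoids any loss in the exponent. One must also invoke the properness hypothesis $f\to-\infty$ on $E$ to secure the precompactness of the sublevel sets and the regularity and compactness of $\Sigma_s$ for all large $s$, which is what makes the divergence theorem and the coarea formula applicable without measure-theoretic subtleties.
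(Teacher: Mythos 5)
Your proposal is correct, and it takes a genuinely different route from the paper's proof. The paper works on the \emph{level sets}: it flows $\Gamma_s=\{v=s\}\cap E$ along $\nabla v/|\nabla v|^2$, differentiates the induced area form, absorbs the normal Ricci term via the identity $\psi_s^*(\Ric(\n,\n))=-\tfrac12\partial_s(\psi_s^*R)$ into a logarithmic derivative, and obtains a differential inequality for the weighted area $J(s)=\int_{\Gamma_{s_0}}\psi_s^*(\sqrt{2s-n-2R})\,dh_s$; this yields the cross-sectional area bound $A(s)=O(s^{L_0+(n-1)/2})$, and only afterwards does the coarea formula together with \eqref{exp pot est1} convert areas into the volume bound. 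You instead work on the \emph{sublevel sets}, transplanting the Cao--Zhou identity \eqref{volumeode} to expanders: integrating $\Delta F=\tfrac n2+R$ over $\{F<s\}\cap E$ and rewriting the outer flux via the coarea formula gives your ODE for $\mathcal{V}_E$ and $\mathcal{R}_E$, after which $R\le L_0$ and a Gr\"onwall argument close the proof. Your route is more elementary: it never sees $\Ric(\n,\n)$, the second fundamental form, or the flow diffeomorphisms, and the curvature hypothesis enters only through the trivial pointwise inequalities $\mathcal{R}_E\le L_0\mathcal{V}_E$ and $\mathcal{R}_E'\le L_0\mathcal{V}_E'$. What the paper's heavier machinery buys is the finer intermediate information (growth of the areas of the cross-sections), and it is the same framework that produces the three-dimensional dichotomy of Theorem \ref{3d exp end vol}, which your method does not recover.

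Two points in your sketch need to be firmed up, both of which you partially anticipate. First, your shift $u=\mathcal{V}_E+|c_0|/a$ divides by $a=\tfrac n2+L_0$, so you need \emph{strict} positivity; \eqref{low R generic} alone only gives $a\ge 0$, and the borderline case must be excluded exactly as the paper does: if $L_0=-\tfrac n2$, then $R\equiv-\tfrac n2$ on $E$, so by the rigidity case of \eqref{low R generic} the soliton is Einstein with $\nabla^2 f\equiv 0$; then either $f$ is constant, contradicting $f\to-\infty$ on $E$, or $M$ splits off a line, contradicting $\Ric\equiv-\tfrac12 g$. Second, applying the divergence theorem on $\{F<s\}\cap E$ presupposes that the topological boundary $\partial E$ is regular enough to carry a flux integral, which an arbitrary end need not satisfy. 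The standard fix is to take as inner boundary a fixed level set $\Gamma_{s_0}=\{F=s_0\}\cap E$ with $s_0>\max\left(\sup_{\partial E}F,\,L_0\right)$ (every such value is regular on $E$, since $|\nabla F|^2=F-R\ge s_0-L_0>0$ there), run your identity on $\{s_0<F<s\}\cap E$, whose boundary consists of the two smooth compact hypersurfaces $\Gamma_{s_0}$ and $\Sigma_s$, and note that the discarded region $\{F\le s_0\}\cap E$ is precompact and contributes only a fixed constant to the volume. With these two repairs your argument is complete and gives the stated exponent $n+2L_0$.
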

\begin{Remark}\label{L0}
The estimate is sharp on the hyperbolic cylinder. It can be viewed as the expanding analog of the volume estimate of shrinker by S.-J. Zhang \cite{Zhs11}, as well as the reverse estimate of \cite{CN09} (see also Theorems \ref{cy shrink bdd} and \ref{vol low exp}). 
\end{Remark}
\begin{Corollary}
Any complete noncompact expanding gradient Ricci soliton with bounded scalar curvature and proper potential function $f$ has at most polynomial volume growth. 
\end{Corollary}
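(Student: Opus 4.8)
The plan is to deduce the corollary from the end-wise estimate of Theorem~\ref{higherdimensionalexpander} (together with Corollary~\ref{coro exp 3d} in dimension three) by covering the complement of a large compact set with finitely many ends, each of which satisfies the theorem's hypotheses. First I would record the two consequences of the assumptions. Writing $F:=-f$ and using the normalization \eqref{expander_normalization} in the form $|\nabla f|^2+R=F$, the boundedness of the scalar curvature, say $-L\le R\le L_0$ on all of $M$, gives $F=|\nabla f|^2+R\ge -L$, so $f$ is bounded from above. Since $f$ is proper, $|f|\to\infty$ at infinity, and combined with the upper bound this forces $f\to-\infty$, i.e. $F\to+\infty$, as $x\to\infty$ on $M$. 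In particular, for every $c$ the superlevel set $\{f\ge c\}=f^{-1}([c,\,\sup_M f])$ is the preimage of a compact interval and hence compact.

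Next I would produce finitely many ends. Fix $\rho_0>\max\{L_0,L\}$ and set $K:=\{F\le\rho_0\}=\{f\ge-\rho_0\}$, which is compact by the previous step. On $\{F\ge\rho_0\}$ one has $|\nabla F|^2=|\nabla f|^2=F-R\ge\rho_0-L_0>0$, so $F$ has no critical points there and each level set $\Gamma_s=\{F=s\}$, with $s\ge\rho_0$, is a smooth compact hypersurface; being compact, it has only finitely many connected components. Flowing by $\nabla F/|\nabla F|^2$ exactly as in the proof of Theorem~\ref{pot est} increases $F$ at unit speed, and since the level sets are compact the flow does not escape in finite time, so it is complete on $\{F\ge\rho_0\}$ and identifies $\{F>\rho_0\}$ diffeomorphically with $\Gamma_{\rho_0}\times(\rho_0,\infty)$. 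Therefore the number of unbounded components of $M\setminus K$ equals the number of components of $\Gamma_{\rho_0}$ and is finite; call these ends $E_1,\dots,E_k$.

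On each $E_i$ the hypotheses of Theorem~\ref{higherdimensionalexpander} hold: $f\to-\infty$ along $E_i$ (since $f\to-\infty$ at infinity on all of $M$), and $R\le L_0$ on $E_i\subset M$. For $n\ge 4$ the theorem supplies a constant $C_i$ with $|B_r(p)\cap E_i|\le C_i\, r^{n+2L_0}$ for all large $r$, while for $n=3$ one invokes Corollary~\ref{coro exp 3d} to get $|B_r(p)\cap E_i|\le C_i\, r^3$. Summing over the finitely many ends and adding the fixed finite quantity $|K|$ yields $|B_r(p)|\le C\, r^{n+2L_0}$ (respectively $C\,r^3$) for all large $r$, which is the asserted polynomial growth.

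The genuinely new analytic input is contained entirely in Theorem~\ref{higherdimensionalexpander}; for the corollary the only points requiring care are the deduction that $f\to-\infty$ at infinity from boundedness of $R$ and properness, and the verification that $M$ has only finitely many ends so that the end-wise bounds can be combined into a single polynomial estimate. I expect the finiteness of ends---more precisely, the argument that the compact level sets stabilize under the gradient flow and thereby control the topology of $M\setminus K$---to be the main technical point, although it becomes routine once $|\nabla F|$ is seen to be bounded away from zero for $F$ large.
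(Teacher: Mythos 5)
Your proposal is correct and follows essentially the same route as the paper: the paper states this corollary without a separate proof, treating it as an immediate consequence of Theorem \ref{higherdimensionalexpander} (and, in dimension three, of Corollary \ref{coro exp 3d}) applied to the ends of $M$, since properness of $f$ together with the bound on $R$ forces $f\to-\infty$ at infinity. Your write-up simply makes explicit the routine details the paper leaves implicit---that $f$ is bounded above and hence tends to $-\infty$, and that the gradient flow of $F=-f$ on $\{F\ge\rho_0\}$ produces finitely many ends whose contributions can be summed---so the two arguments coincide in substance.
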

When $f$ is proper and the scalar curvature $R$ is bounded on an end $E$ of the expander $M$, then by \cite[Theorem 27.4]{RFV4} and by the same proof as in \cite[lemma 7]{Cha20}, the potential function $f$ satisifes the following estimates near the infinity of $E$,
\begin{equation}\label{exp pot est1}
  \tfrac{1}{5}\left(\dist(x,p)\right)^2  \le v(x)\le \tfrac{1}{4}\left(\dist(x,p)\right)^2+\dist(x,p)\sqrt{v(p)}+v(p),
\end{equation}
where $p$ is a fixed point on $M$, $\dist(\cdot,p)$ is the distance function based at $p$ and $v:=\tfrac{n}{2}-f$. The above estimates will be used in the proofs of Theorems \ref{3d exp end vol} and Theorem \ref{higherdimensionalexpander}, since it relates the geodesic balls and sub-level sets of $v$.
\begin{proof}[Proof of Theorem \ref{3d exp end vol}]($n=3$ case) 
Since the scalar curvature is bounded, by the same argument as in \cite[Theorem 11]{Cha20}, $|{\Rm}|$ is bounded on the end $E$. We may invoke the Shi's estimate to see that, $|\nabla^k{\Rm}|$ is also bounded  on $E$ for all $k$.
Let $v=\frac{n}{2}-f$. Then by the properness of $f$, the boundedness of $R$, \eqref{expander_normalization}, and \eqref{exp pot est1}, we have $\lim_{x\to\infty} v=\infty$ and $|\nabla v|^2=v-n/2-R>0$ outside a compact subset of $E$. Hence one can find large $s_0$ such that on $\{x\in E: v(x)\ge s_0\}$, it holds that $\nabla v\neq 0$. Moreover, the level sets $\Gamma_s:=\{x\in E: v(x)=s\}$ are compact smooth hypersurfaces with $N$ components for all $s\ge s_0$, where $N$ is a constant integer, and $\{x\in E: v(x)\ge s_0\}$ is diffeomrphic to $[0,\infty)\times \Gamma_{s_0}$. By abuse of notation, we denote the induced metric on $\Gamma_s$ by $g$ and consider the flow $\psi_s$ of the vector field $\frac{\nabla v}{|\nabla v|^2}$ with $\psi_{s_0}=\text{  id }$. When restricted on $\Gamma_{s_0}$, $\psi_s: \Gamma_{s_0}\longrightarrow \Gamma_{s}$ are diffeomorphisms for all $s\geq s_0$. Let $h_s$ be the pull back metric $\psi_s^*g$ on $\Gamma_{s_0}$. Then we have
\begin{eqnarray*}
\frac{\partial}{\partial s}h_s&=&\psi_s^*\mathcal{L
}_{\frac{\nabla v}{|\nabla v|^2}}g=\psi_s^*\left(\frac{2\nabla ^2 v}{|\nabla v|^2}\right)=\psi_s^*\left(\frac{2\Ric+g}{|\nabla v|^2}\right),
\end{eqnarray*}
where $\mathcal{L}_{\frac{\nabla v}{|\nabla v|^2}}$ is the Lie derivative with respect to $\frac{\nabla v}{|\nabla v|^2}$. 
We denote by $dh_s$ the volume form induced by the metric $h_s$, then by \eqref{expander_normalization} and choosing $s_0$ sufficiently large
\begin{equation}\label{vol form}
\begin{split}
\frac{\partial }{\partial s}dh_s&= \tfrac{1}{2}\tr_{h_s}\frac{\partial}{\partial s}h_s\, dh_s\\
&= \psi_s^*\left(\frac{R-\Ric(\n,\n)+1}{|\nabla v|^2}\right)\,dh_s\\
&= \psi_s^*\left(\frac{R-\Ric(\n,\n)+1}{s-3/2-R}\right)\,dh_s\\
&\leq \psi_s^*\left(\frac{R-\Ric(\n,\n)+1}{s}\right)\,dh_s+\frac{C}{s^2}\,dh_s,
\end{split}
\end{equation}
where $\n$ is the normal vector $\frac{\nabla v}{|\nabla v|}$. Note that the second fundamental form of $\Gamma_s$ is $\frac{\nabla^2 v}{|\nabla v|}=\frac{\Ric+g/2}{\sqrt{s-3/2-R}}=O(s^{-1/2})$. Let $\sigma_1\le\sigma_2$ be the principal curvatures of $\Gamma_s$. By the Gauss equation, we have 

\begin{equation}\label{Gauss eqn}
\begin{split}
  K_s&=R_{1221}+ \sigma_1\sigma_2,;\\
  2K_s&=R-2\Ric(\n,\n)+2\sigma_1\sigma_2\\
      &=R-2\Ric(\n,\n)+O(s^{-1}),
\end{split}
\end{equation}
where $K_s$ denotes the Gauss curvature of $\Gamma_s$. We have 
\begin{equation}\label{vol form2}
\begin{split}
\frac{\partial }{\partial s}dh_s &\leq \psi_s^*\left(\frac{R-\Ric(\n,\n)+1}{s}\right)\,dh_s+\frac{C}{s^2}\,dh_s\\
&\leq \psi_s^*\left(\frac{2K_s+\Ric(\n,\n)+1}{s}\right)\,dh_s+\frac{C'}{s^2}\,dh_s\\
\end{split}
\end{equation}
By virtue of $\nabla R=2\Ric(\nabla f)$, we have $\psi_s^*(\Ric(\n,\n))=-\frac{1}{2}\langle\nabla R, \nabla v\rangle /|\nabla v|^2=O(|\nabla R|/|\nabla v|)=O(s^{-1/2})$ . It follows from \eqref{vol form2} that
\[
\frac{\partial }{\partial s}dh_s\leq \psi_s^*\left(\frac{2K_s+1}{s}\right)\,dh_s+\frac{C''}{s^{3/2}}\,dh_s.
\]
By applying the Gauss Bonnet Theorem to each components of $\Gamma_s$. we see that
\[
A'(s)\le A(s)/s+C_0/s+C''A(s) s^{-3/2}. 
\]
where $A(s):=$ area of $\Gamma_s$. Let $Q(s):=s^{-1}e^{2C''s^{-1/2}}(A(s)+C_0)$. Then $Q$ satisfies $Q'(s)\le 0$. By integrating the differential inequality, we see that for all $s\ge s_0$,
\[
A(s)\le Q(s_0)s.
\]
By the coarea formula, 
\begin{eqnarray*}
|\{s_0\le v\le s\}\cap E|&=&\int_{s_0}^s\int_{\Gamma_{\rho}}\frac{1}{|\nabla v|}\,d\rho\\
&\le& C\int_{s_0}^s \rho^{-1/2}A(\rho)\,d\rho\\
&\le& 2CQ(s_0)s^{3/2}/3.
\end{eqnarray*}
Using \eqref{exp pot est1}, we see that $v\sim \dist^2(\cdot,p)$ for some point $p.$
Hence we can find constants $C_1$ and $C_2$ such that for all large $r$
\[
|B_r(p)\cap E|\le |\{ v\le C_1r^2\}\cap E|\le C_2r^3+|\{ v\le s_0\}\cap E|.
\]
This shows the Euclidean volume upper bound for the expander. Let $\Lambda:=\int_{\Gamma_s}2K_s$ which is a constant for all large $s$ by the Gauss Bonnet Theorem. We have the following two cases. 
\begin{enumerate}
    \item If there is a large $s_1\ge s_0$ such that for all $s\ge s_1$, we have $A(s)+\Lambda\le 1$.
    \item There is a sequence of $s_i\to\infty$ such that $A(s_i)+\Lambda> 1$ for all $i$.
\end{enumerate}
In Case $1$, by virtue of Coarea formula, 
\begin{eqnarray*}
|\{s_1\le v\le s\}\cap E|=\int_{s_1}^s\int_{\Gamma_{\rho}}\frac{1}{|\nabla v|}&\le& C\int_{s_1}^s \rho^{-1/2}A(\rho)\,d\rho\\
&\le& C\int_{s_1}^s \rho^{-1/2}(1+|\Lambda|)\,d\rho\\
&\le& 2C(1+|\Lambda|)\sqrt{s}.
\end{eqnarray*}
$|B_r(p)\cap E|\le Cr$ then follows from the fact that $v\sim \dist_g^2(\cdot,p)$ (see \eqref{exp pot est1}). We shall show that $|B_r(p)\cap E|\ge Cr^3$ if Case $2$ holds. Let $i$ be a large integer to be specified. By the similar computation as before, we have
\begin{equation*}
\begin{split}
\frac{\partial }{\partial s}dh_s &\ge \psi_s^*\left(\frac{R-\Ric(\n,\n)+1}{s}\right)\,dh_s-\frac{C}{s^2}\,dh_s\\
&\ge \psi_s^*\left(\frac{2K_s+\Ric(\n,\n)+1}{s}\right)\,dh_s-\frac{C'}{s^2}\,dh_s\\
&\ge \psi_s^*\left(\frac{2K_s+1}{s}\right)\,dh_s-\frac{C''}{s^{3/2}}\,dh_s
\end{split}
\end{equation*}
By integrating both sides over $\Gamma_{s_0}$ and applying the Gauss Bonnet Theorem,
\[
A'(s)\ge (A(s)+\Lambda)/s-C''A(s)s^{-3/2}. 
\]
Let $W:=s^{-1}e^{-2C"s^{-1/2}}(A(s)+\Lambda)$. The derivative of $W$ satisfies 
\begin{equation*}
\begin{split}
W'(s)\ge -C''|\Lambda|s^{-5/2}e^{-2C"s^{-1/2}}\ge -C''|\Lambda|s^{-5/2}.
\end{split}
\end{equation*}
By the definition of $s_i$, $A(s_i)+\Lambda>1$. Since $\lim_{i\to\infty}s_i=\infty$, there exists a large $i$ such that
\[
e^{-2C''s_i^{-1/2}}-2C''|\Lambda|s_i^{-1/2}>1/2.
\]
Fixing this $i$, we may integrate the above inequality on $[s_i,s]$ and get
\[
W(s)\ge W(s_i)-2C''|\Lambda|s_i^{-3/2}\ge s_i^{-1}e^{-2C"s_i^{-1/2}}-2C''|\Lambda|s_i^{-3/2}\ge (2s_i)^{-1}.
\]
Hence for all $s\ge s_i(4|\Lambda|+1)$,
\[
A(s)\ge (2s_i)^{-1}s-\Lambda\ge (4s_i)^{-1}s.
\]
$|B_r(p)\cap E|\ge Cr^3$ is a consequence of a similar argument using Coarea formula and \eqref{exp pot est1}.
This completes the proof of the theorem.
\end{proof}

\begin{proof}[Proof of Corollary \ref{coro exp 3d}]
The first statement is a consequence of Theorem \ref{3d exp end vol} if we let $E=M$. The second assertion follows essentially from the argument in \cite[Proposition 5.1]{CN09} and Theorem \ref{3d exp end vol}. We include the argument for the sake of completeness. Since $R\to 0$ as $x\to \infty$, $f$ is proper \cite[Corollary 3]{Cha20} and there is a large $r_0>0$ such that on $M\setminus B_{r_0}(p)$
\[
R\ge -\tfrac{1}{2}.
\]
We integrate the trace of the expanding soliton equation over $B_{r}(p)$ to get
\begin{equation}
    \begin{split}
        \left|B_{r}(p)\setminus B_{r_0}(p)\right|&\le \int_{B_{r}(p)\setminus B_{r_0}(p)}(R+\tfrac{3}{2})\, dg\\
        &\le \int_{B_{r}(p)\setminus B_{r_0}(p)}(R+\tfrac{3}{2})\, dg+\int_{B_{r_0}(p)}(R+\tfrac{3}{2})\, dg\\
        &=-\int_{B_r(p)}\Delta f\,dg =-\int_{\partial B_r(p)}\partial_r f\\
        &\le \left(\tfrac{r}{2}+\sqrt{\tfrac{3}{2}-f(p)}\right)A\left(\partial B_r(p)\right),
    \end{split}
\end{equation}
we have used the facts that $|\nabla f|\le \dist (x,p)/2+\sqrt{3/2-f(p)}$ \cite[Theorem 27.4]{RFV4} and $R\ge -\frac{3}{2}$ on $M$ \cite{PRS11, Zhs11} (see also \eqref{low R generic}). Integrating the above inequality, we have that, for all $r\ge r_0+1$
\[
\left|B_{r_0+1}(p)\setminus B_{r_0}(p)\right|\left(\tfrac{r+2\sqrt{3/2-f(p)}}{r_0+1+2\sqrt{3/2-f(p)}}\right)^2\le \left|B_{r}(p)\setminus B_{r_0}(p)\right|.
\]
Hence $\left|B_{r}(p)\right|$ grows at least quadratically in $r$ and the cubic volume growth now follows from the dichotomy in Theorem \ref{3d exp end vol}. This completes the proof of the corollary. 
\end{proof}

In higher dimensions, in particular when $n\ge 4$, the Gauss Bonnet Theorem cannot be applied to the three dimensional level sets. A different argument is needed to handle the integral of the scalar curvature over the level set.

\begin{proof}[Proof of Theorem \ref{higherdimensionalexpander}]($n\ge 4$ case) 
We retain the same notation as in the proof of Theorem \ref{3d exp end vol}. We may choose $s_0$ large enough such that on $\{x\in E: v(x)\ge s_0\}$, it holds that $v-n/2-R\ge s_0-n/2-L_0>0$. Moreover, using the facts that $R$ is bounded on $E$ and $\psi_s^*(\Ric(\n,\n))=-\frac{1}{2}\langle\nabla R, \nabla v\rangle /|\nabla v|^2=-\partial_s \left(\psi_s^*R/2\right)$, we may proceed as in the proof of Theorem \ref{3d exp end vol} to see that

\begin{equation}
\begin{split}
\frac{\partial }{\partial s}dh_s&= \psi_s^*\left(\frac{R-\Ric(\n,\n)+(n-1)/2}{s-n/2-R}\right)\,dh_s\\
&= \psi_s^*\left(\frac{R+(n-1)/2}{s-n/2-R}\right)\,dh_s+ \psi_s^*\left(\frac{1}{2s-n-2R}\right)\partial_s \psi_s^*R\,dh_s\\
&= \psi_s^*\left(\frac{2R+(n-1)}{2s-n-2R}\right)\,dh_s+ \psi_s^*\left(\frac{1}{2s-n-2R}\right)\,dh_s\\
&\quad -\psi_s^*\left(\frac{1}{2s-n-2R}\right)\partial_s \psi_s^*(s-n/2-R)\,dh_s\\
&\le \psi_s^*\left(\frac{2L_0+n}{2s-n-2R}\right)\,dh_s-\tfrac{1}{2}\partial_s \psi_s^*\ln(2s-n-2R)\,dh_s.\\
\end{split}
\end{equation}
Let $J(s):=\int_{\Gamma_{s_0}}\psi_s^*\left(\sqrt{2s-n-2R}\right)\,dh_s\ge 0$. Thanks to \eqref{low R generic} and \eqref{Rupbdd}, it holds that $L_0+n/2\ge R+n/2\ge 0.$ Hence
\begin{equation}
\begin{split}
J'(s)&\le \int_{\Gamma_{s_0}}\psi_s^*\left(\frac{2L_0+n}{\sqrt{2s-n-2R}}\right)\,dh_s\\
&\le \frac{L_0+n/2}{s}J(s)+\frac{C}{s^2}J(s).
\end{split}
\end{equation}
By integrating the above inequality, we have for large $s\gg s_0$,
\[
\sqrt{s}A(s)\le J(s)\le s^{(L_0+n/2)}s_0^{-(L_0+n/2)}e^{C/s_0-C/s}J(s_0).
\]
Hence $A(s)=O(s^{(L_0+(n-1)/2)})$. 
In view of the rigidity case of \eqref{low R generic}, we see that $L_0+n/2> 0$. Otherwise, $R=n/2$ somewhere on $M$ and consequently, $\Ric\equiv -g/2$ and $\nabla^2 f\equiv 0$ on $M$ (see \cite{PRS11, Zhs11, RFV4}). Hence either $f$ is a constant or $M$ splits off a factor of $\mathbb{R}$. The former case is impossible as $f$ is proper along the end $E$. Contradiction also arises in the latter case as $\Ric\equiv -g/2$. Hence there is a large $s_1$ such that for all $s\ge s_1$
\begin{eqnarray*}
|\{s_1\le v\le s\}\cap E|=\int_{s_1}^s\int_{\Gamma_{\rho}}\frac{1}{|\nabla v|}&\le& C\int_{s_1}^s \rho^{-1/2}A(\rho)\,d\rho\\
&\le& C'\int_{s_1}^s \rho^{(L_0+(n-2)/2)}\,d\rho\\
&\le& C''s^{(L_0+n/2)}.
\end{eqnarray*}
We used the fact that $L_0+n/2> 0$ in the last inequality. $|B_r(p)\cap E|\le Cr^{2L_0+n}$ then follows similarly from \eqref{exp pot est1} as in the proof of Theorem \ref{3d exp end vol}. This finishes the the proof of Theorem \ref{higherdimensionalexpander}.
\end{proof}

\bibliographystyle{amsalpha}

\newcommand{\etalchar}[1]{$^{#1}$}
\providecommand{\bysame}{\leavevmode\hbox to3em{\hrulefill}\thinspace}
\providecommand{\MR}{\relax\ifhmode\unskip\space\fi MR }
\providecommand{\MRhref}[2]{%
  \href{http://www.ams.org/mathscinet-getitem?mr=#1}{#2}
}
\providecommand{\href}[2]{#2}

\bigskip
\bigskip

\noindent Department of Mathematics, University of California, San Diego, CA, 92093
\\ E-mail address: \verb"pachan@ucsd.edu "
\\

\noindent Department of Mathematics, University of California, San Diego, CA, 92093
\\ E-mail address: \verb"zim022@ucsd.edu"
\\

\noindent School of Mathematics, University of Minnesota, Twin Cities, MN, 55414
\\ E-mail address: \verb"zhan7298@umn.edu"

\end{document}